\newtheorem{theorem}{Theorem}
\newtheorem{thm}{Theorem}[section]
\newtheorem{lemma}[thm]{Lemma}
\newtheorem{prop}[thm]{Proposition}
\theoremstyle{definition}
\newtheorem{defi}[thm]{Definition}
\theoremstyle{remark}
\newtheorem{remark}[thm]{Remark}
\numberwithin{equation}{section}
\newcommand{\ad}{\text{ad}}
\newcommand{\tr}{\text{tr}}                        
\newcommand{\Om}{\Omega}
\newcommand{\Real}{\mathbb{R}}       
\DeclareMathOperator{\rk}{rank}
\newcommand{\RR}{{\mathbb R}}
\newcommand{\surj}{\to\kern-1.8ex\to}
\newcommand{\lra}[1]{\stackrel{#1}{\longrightarrow}}
\newcommand{\cA}{\mathcal{A}}
\newcommand{\cC}{\mathcal{C}}
\newcommand{\cE}{\mathcal{E}}
\newcommand{\cM}{\mathcal{M}}
\newcommand{\cP}{\mathcal{P}}
\newcommand{\cG}{\mathcal{G}}
\newcommand{\cR}{\mathcal{R}}
\newcommand{\Lie}{\operatorname{Lie}}
\newcommand{\cH}{\mathcal{H}} 
\def\om{\omega}
\def\Om{\Omega}
\def\Lie{\mathrm{Lie}}
\def\Diff{\mathrm{Diff}}
\def\Aut{{\mathrm{Aut}}}
\def\Im{\mathrm{Im}}
\def\ctG{\widetilde{\mathcal{G}}}
\def\cA{\mathcal{A}}
\def\cG{\mathcal{G}}
\def\cH{\mathcal{H}}
\def\cP{\mathcal{P}}
\def\cE{\mathcal{E}}
\def\cR{\mathcal{R}}
\def\bfL{\mathbf{L}}
\def\bfP{\mathbf{P}}
\def\hbfL{\hat{\mathbf{L}}}
\def\hbfP{\hat{\mathbf{P}}}
\newcommand{\ra}{\rangle}
\newcommand{\la}{\langle}
\newcommand{\ot}{\otimes}
\begin{document}

\title[$G2$, Strominger system and Moduli]{Moduli of $G_2$ structures and the Strominger system in dimension $7$}

\author[A. Clarke]{Andrew Clarke}
\address{Instituto de Matem\'atica, Universidade Federal do Rio de Janeiro,
Av. Athos da Silveira Ramos 149,
Rio de Janeiro, RJ, 21941-909,
Brazil.}
\email{andrew@im.ufrj.br}

\author[M. Garcia-Fernandez]{Mario Garcia-Fernandez}
\address{Instituto de Ciencias Matem\'aticas (CSIC-UAM-UC3M-UCM)\\
  Nicol\'as Cabrera 13--15, Cantoblanco\\ 28049 Madrid, Spain}
  \email{mario.garcia@icmat.es}
\author[C. Tipler]{Carl Tipler}
\address{D\'epartement de math\'ematiques,
Universit\'e de Bretagne Occidentale,
6 Av. Victor Le Gorgeu, 29238, Brest Cedex 3, France.
}
\email{carl.tipler@univ-brest.fr}

\date{\today}

\maketitle


\begin{abstract}

We consider $G_2$ structures with torsion coupled with $G_2$-instantons, on a compact $7$-dimensional manifold. The coupling is via an equation for $4$-forms which appears in supergravity and generalized geometry, known as \emph{the Bianchi identity}. The resulting system of partial differential equations can be regarded as an analogue of the Strominger system in $7$-dimensions. We initiate the study of the moduli space of solutions and show that it is finite dimensional using elliptic operator theory. We also relate the associated geometric structures to generalized geometry. 

\end{abstract}

\section{Introduction}\label{sec:intro}
The study of the moduli space of torsion-free $G_2$ structures on a compact $7$-dimensional spin manifold $M$ was initiated by
Joyce \cite{joy0,joy}, who proved that the \emph{period map} defines a local diffeomorphisms 
to the cohomology group $H^3(M,\mathbb{R})$. Further, there is a natural pseudo-Riemannian metric of Hessian type on the 
moduli space first defined by Hitchin \cite{Hit0}. 
In this work we address the question of whether $G_2$ structures with torsion on a compact $7$-dimensional manifold arise in 
moduli and, if so, what is the nature of their moduli space. Our approach to this problem is inspired by physics, 
and relies on recent developments on the study of the Strominger system of partial differential equations \cite{grt}. 
Here, we consider $G_2$ structures  with torsion (cocalibrated, of type $W3$) coupled with $G_2$ instantons, 
by means of an equation for $4$-forms which appears in supergravity and generalized geometry \cite{Strom,Hit1}. 
The resulting system of equations can be regarded as an analogue of the Strominger system in $7$-dimensions. 
Our main result implies the finite-dimensionality of the moduli space.
As a consequence of our method, we also obtain a new proof of the characterization of the infinitesimal moduli space of $G_2$-holonomy metrics in terms of $H^3(M,\mathbb{R})$, as originally observed by Joyce and Hitchin.

Geometry in $6$ and $7$ dimensions are closely tied by a basic linear algebra fact: the existence of \emph{non-degenerate} real $3$-forms \cite{Hit0}. A single non-degenerate $3$-form on a manifold $M$ defines a reduction of the structure group of $M$ to $SL(3,\mathbb{C})$ if $\dim M = 6$ --- and hence a Calabi-Yau structure ---, and a reduction to $G_2$ if $\dim M = 7$. 

An important part of the classical theory of Calabi-Yau three-folds has evolved recently to the study of $SU(3)$-structures with torsion, and in particular towards the understading of a complicated system of partial differential equations motivated by physics, known as the Strominger system \cite{Strom}. The mathematical study of the Strominger system (see \cite{TsengYau} for a survey) was initiated by Li and Yau as a natural generalization of the Calabi problem, and in relation to `Reid's fantasy', on the moduli space of complex $3$-folds with trivial canonical bundle and varying topology. Relying on the common geometric features of $6$ and $7$ dimensions, and the recent progress made in the understanding
of the moduli space for the Strominger system \cite{grt}, it is natural to ask whether there is a similar pattern in $7$-dimensions which helps to shed light on the moduli space of 
$G_2$-structures with torsion.

From the point of view of physics, the Strominger system is a particular instance of a more general system of equations, known as \emph{the Killing spinor equations in (heterotic) supergravity}. The \emph{compactification} of the physical theory leads to the study 
of models of the form $N^k\times M^{10-k}$, where $N^k$ is a $k$-dimensional Lorentzian manifold 
and $M^{10-k}$ is a Riemannian spin manifold which encodes the extra dimensions of a supersymmetric vacuum. The Killing spinor equations, for a Riemannian metric $g$, a spinor $\Psi$, a function $\phi$ (the dilaton) and a $3$-form $H$ (the NS-flux) on $M^{10-k}$, can be written as
\begin{equation}
 \label{eq:Killing spinors 10dim}
 \nabla \Psi=0, \qquad \qquad (d\phi - \frac{1}{4}H)\cdot \Psi=0
\end{equation}
where $\nabla$ is a $g$-compatible connection with skew-symmetric torsion $H$. 
Solutions to (\ref{eq:Killing spinors 10dim}) provide rich geometrical structures on $M$. If the
torsion $H$ vanishes, the existence of a parallel spinor reduces the holonomy of the Levi-Civita connection on $M$ to $SU(n), Sp(n),G_2$ or $Spin(7)$ according to its dimension.
However, the torsion-free condition, often equivalent to the condition $dH=0$ --- the so-called strong solutions ---, is very restrictive, as many interesting solutions to the equations arise in   
manifolds equiped 
with metric connections with skew-symmetric torsion and holonomy contained in $SU(n), Sp(n),G_2$ or $Spin(7)$.

An interesting relaxation of the notion of strong solution is provided by the \emph{Bianchi identity} (related to the anomaly cancellation condition in string theory), which requires a correction of $dH$ of the form
\begin{equation}
 \label{eq:Anomaly}
 dH= \tr(F_A\wedge F_A)-\tr(R_\nabla\wedge R_\nabla)
\end{equation}
where $F_A$ is the curvature of a connection $A$ on a principal $K$-bundle $P_K$ over $M$
and $R_\nabla$ is the curvature of an additional linear connection $\nabla$ on the tangent bundle of $M$. 
The extra requirements for a solution of the Killing spinor equations \eqref{eq:Killing spinors 10dim} and the Bianchi identity \eqref{eq:Anomaly} to provide with a supermetric vacuum of the theory is given by the instanton conditions \cite{Ivan09}
\begin{equation}
 \label{eq:instanton}
R_\nabla \cdot \Psi =0, \qquad \qquad   F_A\cdot \Psi =0.
\end{equation}
In a $6$-dimensional compact manifold $M$, the combination of the above mentioned equations (\ref{eq:Killing spinors 10dim}), (\ref{eq:Anomaly}) and (\ref{eq:instanton}) leads to the Strominger system. In this paper,
we initiate the study of the moduli space of solutions to the Strominger equations in $7$-dimensions, that we introduce next.

Consider $M^7$ a compact oriented smooth manifold. Then, the equations (\ref{eq:Killing spinors 10dim}), (\ref{eq:Anomaly}) and (\ref{eq:instanton}) 
are equivalent to the following system \cite{FrIv03}:
\begin{equation}  \label{eq:systemG2Killing}
  \begin{split}
    d\om\wedge\om & =0, \ \ \ \ \ \ \ \ \ \ \ \ \ d * \om = -4 d \phi \wedge *\om ,\\
    F_A \wedge *\om & = 0, \ \ \ \ \ \ \ \ \ \ \ \  R_\nabla \wedge *\om  = 0,\\
      dH  & = \tr(F_A\wedge F_A)-\tr(R_\nabla\wedge R_\nabla),
  \end{split}
\end{equation}
where $\omega$ is a postive $3$-form that defines a $G_2$ structure on $M$, $-4d\phi$ is the Lee form $\theta_\omega$ of $\omega$, and $H$ is the torsion of the $G_2$-structure, given by
$$
H = - *( d\om - \theta_\omega \wedge \om ).
$$
The first line of equations in \eqref{eq:systemG2Killing} characterizes a special type of $G_2$-structures, namely cocalibrated $G_2$ structures of type $W3$, according to the classification by Fernandez and Gray \cite{FerGr}.
Some Riemannian properties of these structures are studied in \cite{FrIv03}.
The second line of equations in \eqref{eq:systemG2Killing} is the $G_2$-instanton condition, and has been the subject of important recent progress (see e.g. \cite{DS,HN,O,SW},
and the references therein). The last line, the Bianchi identity, is a defining equation for a Courant algebroid, 
and leads to a new mathematical approach to equations from string theories and supergravity theories using methods from generalized geometry (see e.g. \cite{GF,grt,G3}).

Basic compact solutions to the $7$-dimensional Strominger system \eqref{eq:systemG2Killing} are provided by torsion-free $G_2$-structures. For this, one sets $K = G_2$ and $P_K$ the bundle of orthogonal frames of a $G_2$-holonomy metric, and defines $\nabla = A$ equal to the Levi-Civita  connection. An interesting open question is whether one can deform a torsion-free solution, whereby defining a torsion solution of the $7$-dimensional Strominger system (along the lines of the main result in \cite{LiYau}). The first compact solutions with non-zero torsion (and constant dilaton function $\phi$) to the $7$-dimensional Strominger system \eqref{eq:systemG2Killing} have been constructed in \cite{FIUV7}. Non-compact solutions to \eqref{eq:systemG2Killing} have been constructed in \cite{FIUVa7,GuNi}.

We next state our main result, concerning the $7$-dimensional Strominger system \eqref{eq:systemG2Killing}. Let $P_M$ be the bundle of oriented frames over $M$.
The group $\ctG:=\Aut(P_M\times_M P_K)$ acts naturally on the set of parameters $(\omega,\phi,\nabla,A)$ for the system (\ref{eq:systemG2Killing}), preserving solutions, and thus defining a natural set
$$
\mathcal{M} = \{(\omega,\phi,\nabla,A)\;  \mathrm{satisfying} \eqref{eq:systemG2Killing}\}/\ctG.
$$
In this paper, we give the first steps towards the construction of a natural structure of smooth manifold on $\mathcal{M}$. For this, using elliptic operator theory we construct an elliptic complex of differential operators whose cohomology provides an ambient space for the (expected) tangent of $\mathcal{M}$ at $(\omega,\phi,\nabla,A)$. More precisely, we construct a finite-dimensional space of infinitesimal deformations of a solution $(\omega,\phi,\nabla,A)$ of \eqref{eq:systemG2Killing}, modulo the action of $\ctG$. 
\begin{theorem}
 Let $M$ be a $7$-dimensional compact manifold. Then the moduli space of solutions to the system of equations (\ref{eq:systemG2Killing}) on $M$ modulo the 
 $\ctG$-action is finite-dimensional.
\end{theorem}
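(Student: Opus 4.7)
The plan is to construct an elliptic complex that models the tangent space to the moduli space $\cM$ at a solution $(\omega,\phi,\nabla,A)$, and then invoke Hodge theory on the compact manifold $M$ to conclude finite-dimensionality. The formal tangent space to $\cM$ is the quotient of the space of infinitesimal solutions to the linearized equations by the image of the infinitesimal $\ctG$-action, so the idea is to fit both into a single complex whose middle cohomology is precisely this quotient.

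First, I would identify the relevant spaces of fields. A $G_2$-structure is given by a positive $3$-form, so variations live in $\Omega^3(M)$; the dilaton varies in $C^\infty(M)$; and the connections $\nabla$ on $TM$ and $A$ on $P_K$ vary in $\Omega^1(M,\End(TM))$ and $\Omega^1(M,\ad P_K)$ respectively. Denote the total space of field variations by $V_0$. Linearizing the five equations in \eqref{eq:systemG2Killing} produces a (generally mixed-order) differential operator $\delta_1 : V_0 \to V_1$, where $V_1$ collects forms of various degrees with values in appropriate bundles, including the $4$-form that records the linearized Bianchi identity. The infinitesimal $\ctG$-action gives a first-order operator
$$
\delta_0 : \Omega^0(M,\ad(P_M\times_M P_K)) \longrightarrow V_0,
$$
and the relation $\delta_1\circ\delta_0 = 0$ holds because $\ctG$ preserves the solution set.

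Next I would prove that the complex $0\to\Omega^0(\ad(P_M\times_M P_K))\xrightarrow{\delta_0} V_0 \xrightarrow{\delta_1} V_1$ is elliptic, or equivalently that a suitably rolled-up operator obtained by appending a gauge-fixing term (such as $\delta_0^*$) to $\delta_1$ is elliptic. Symbol exactness exploits the pointwise decomposition of $\Omega^*(M)$ under the $G_2$-action: the instanton equations $F_A\wedge *\omega=0$ and $R_\nabla\wedge *\omega=0$ isolate the $\mathbf{7}$-components of the curvature perturbations, while the cocalibrated and Lee-form equations pin down parts of the $d\omega$- and $d\phi$-variations, and the Bianchi identity supplies the remaining direction through a $d$-like term on the torsion $H$. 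A natural model for this argument is the elliptic complex produced in \cite{grt} for the $6$-dimensional Strominger system, which should adapt to $7$ dimensions via $G_2$-representation theory in place of $SU(3)$-representation theory.

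The main obstacle I expect is the mixed-order nature of the Bianchi identity: $dH$ is second-order in $\omega$ since $H$ is built algebraically from $d\omega$, whereas $\tr(F_A\wedge F_A)$ and $\tr(R_\nabla\wedge R_\nabla)$ depend only algebraically on the curvatures and so are first-order in the connection variations. At the symbol level the top-order contributions in $\delta\omega$ must cancel against suitably gauge-fixed modes, and this cancellation will use the defining identities satisfied at the solution together with the $G_2$-type constraints. Once ellipticity is established, Hodge theory on the compact manifold $M$ immediately yields that $\ker\delta_1/\operatorname{im}\delta_0$ is finite-dimensional, providing the desired ambient space for the tangent of $\cM$ and proving the theorem. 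Specializing to the torsion-free case should moreover recover Joyce's identification of the $G_2$-holonomy moduli with $H^3(M,\mathbb{R})$, as anticipated in the introduction.
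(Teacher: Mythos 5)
Your overall skeleton --- linearize the equations together with the infinitesimal $\ctG$-action, show the resulting three-term sequence is elliptic in the middle, and conclude finite-dimensionality from elliptic theory on the compact $M$ --- is exactly the paper's strategy, and you correctly flag the mixed-order nature of the Bianchi identity as the crux. However, your proposed resolution of that crux is not the mechanism that works, and the two computations that constitute the actual content of the proof are absent. There is no cancellation of the top-order $\dot\om$-contribution to $dH$ against ``gauge-fixed modes'' or against the connection variations. What the paper does instead is treat the linearization as a Douglis--Nirenberg system with a system of orders $(\mathbf{t},\mathbf{s})$ in the sense of \cite{DN,LM1,LM2}, chosen so that the coupling terms --- the $\dot\theta$-term $-2d(c(\dot\theta,F_\theta))$ in the linearized Bianchi identity and the $\dot\om$-term $F_\theta\wedge *J\dot\om$ in the linearized instanton equation --- become \emph{subprincipal}. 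The weighted principal symbol then block-diagonalizes into a $(\dot\om,\dot\phi)$-block and a $\dot\theta$-block, and ellipticity is proved for each block separately. Without some such device your single complex has no well-defined principal symbol at all, so the appeal to Hodge theory does not yet parse.

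Second, even granting the decoupling, the symbol-exactness of the $(\dot\om,\dot\phi)$-block is a nontrivial $G_2$-representation-theoretic computation that you only gesture at: one must show that if
\begin{equation*}
v\wedge\dot\om\wedge\om=0,\qquad v\wedge(*J\dot\om+\dot\phi\,*\om)=0,\qquad v\wedge *(v\wedge(\dot\om+\dot\phi\,\om))=0,
\end{equation*}
then $\dot\phi=0$ and $\dot\om=v\wedge\beta$ with $\beta\in\Lambda^2_7$, i.e. $\dot\om$ lies in the image of the symbol of $V\mapsto d\iota_V\om$. This uses the splitting $\Lambda^2=\Lambda^2_7\oplus\Lambda^2_{14}$, the operator $J=\tfrac{4}{3}\pi_1+\pi_7-\pi_{27}$, and Bryant's identities showing that at symbol level $d*Jd$ annihilates $\Om^2_7$ and has vanishing $\pi_7$-component on $\Om^2_{14}$, after which the $\Lambda^2_{14}$-part is eliminated by an orthogonality argument. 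The instanton block likewise requires the injectivity of the symbol of $\bfL_P+\bfP_P^*$, which reduces to the pointwise fact that for orthogonal $\xi,a\in\Lambda^1$ the $\Lambda^2_7$-component of $\xi\wedge a$ is nonzero. Your proposal names the right ingredients in spirit, but as written it neither resolves the order bookkeeping nor carries out either symbol computation, so the ellipticity claim --- and hence the theorem --- is not established.
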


The Bianchi identity motivates the introduction of generalized geometry to study solutions to (\ref{eq:systemG2Killing}). An equivalent formulation for this system
is provided in Section \ref{sec:genGeom} by mean of Killing spinors on a fixed Courant algebroid. 
As a corollary, the system (\ref{eq:systemG2Killing}) turns out to be a natural system of equations in generalized geometry.
In Section \ref{sec:genGeom}, we provide a relation between the different point of views at the level of moduli spaces.

Further motivation for the study of the moduli space of the system \eqref{eq:systemG2Killing} comes from physics. In this context, a solution of \eqref{eq:systemG2Killing} describes a half-BPS domain wall solution of heterotic supergravity with flux in four dimensions, that preserve $N=1/2$ supersymmetry \cite{OssaLaSv,GLL}. This type of models is particularly appealing in heterotic string theory, since the resulting vacuum breaks supersymmetry in a controlled way, and the low-energy dynamics still allow some of the methods of an $N=1$ four-dimensional effective field theory.


\textbf{Acknowledgments:} CT is partially supported by ANR project EMARKS No ANR-14-CE25-0010 and by CNRS grant PEPS jeune chercheur D-2016-45. AC would like to acknowledge the hospitality of the LMBA of the Universit\'e de Bretagne Occidentale during his visit. MGF is supported by a Marie Sklodowska-Curie grant (MSCA-IF-2014-EF-655162), from the European Union's Horizon 2020 research and innovation programme.


\section{Infinitesimal Moduli of the Strominger system in dimension $7$}
\label{sec:moduli}
Let $M$ be a $7$ dimensional oriented compact manifold. Let $P$ be a principal bundle over $M$ with structure group $G$. Fix a non-degenerate biinvariant pairing 
$$
c \colon \mathfrak{g} \otimes \mathfrak{g} \to \mathbb{R}
$$
on the Lie algebra of $G$. In this section we study the moduli space of solutions to the \emph{Strominger system in dimension $7$}:
\begin{equation}\label{eq:G2 system}
  \begin{split}
    d\om\wedge\om & =0, \ \ \ \ \ \ \ \ \ \ \ \ \ \ d * \om = -4 d\phi \wedge *\om ,\\
    -d(*( d\om + 4 d\phi \wedge \om ))  & = c(F_\theta\wedge F_\theta),\\
     F_\theta \wedge *\om & = 0,
  \end{split}
\end{equation}
where $\om\in \Om^3$ defines a $G_2$ structure, $\phi\in \cC^\infty(M)$, $\theta$ is a connection in $P$, and $F_\theta$ denotes the curvature of $\theta$. 

The first line of equations in \eqref{eq:G2 system} implies that $-4d\phi$ is necessarily the Lee form of $\om$ (see \cite[Proposition 1]{Bryant}), and 
$$
H=-*( d\om + 4 d\phi \wedge \om )
$$ 
is the associated torsion $3$-form. Note that the last equation in \eqref{eq:systemG2Killing}, known as the Bianchi identity, imposes the vanishing of the first Pontryagin class of $P$, calculated using the  biinvariant pairing $c$ on $\mathfrak{g}$
$$
p_1(P)= 0.
$$

As mentioned in Section \ref{sec:intro}, in physics the principal bundle $P$ is taken to be a product $P_M\times_M P_K$, of the bundle of frames $P_M$ of $M$ by a principal bundle $P_K$ over $M$, with compact structure group $K$. Furthermore, $c$ is taken to be of  the form
\begin{equation}\label{eq:pairingc}
  c = 2\alpha'(- \tr_\mathfrak{k} -  c_{\mathfrak{gl}}),
\end{equation}
where $\alpha'$ is a positive constant, $- \tr_\mathfrak{k}$ denotes the Killing form on $\mathfrak{k}$
and $c_{\mathfrak{gl}}$ is a non-degenerate invariant metric on
$\mathfrak{gl}(7,\RR)$, which extends the non-degenerate Killing form
$-\tr$ on $\mathfrak{sl}(7,\RR) \subset \mathfrak{gl}(7,\RR)$. Hence, in this case the aforementioned topological  constraint for the Bianchi identity is equivalent to
$$
p_1(P_M) = p_1(P_K).
$$

\begin{remark}
An additional condition which appears in the physics literature is that $\theta$ is a product connection $\theta = \nabla \times A$, with induced linear connection $\nabla$ on $M$ satisfying $\nabla g = 0$. This is an additional source of complications for the analysis that we ignore in the present paper, and which can be treated with the methods introduced in \cite{grt}.
\end{remark}


\subsection{Main Result}

The parameter space $\cP$ for the equations (\ref{eq:G2 system}) is:
\begin{equation*}
 \label{eq:parameters}
 \cP= \Om^3_{>0}\times \cC^\infty(M) \times \cA
\end{equation*}
where $\Om^3_{>0}$ is the space of positive $3$-forms on $M$ \cite{Hit0} and $\cA$ is the space of 
connections $\theta$ on $P$.

Let $\Diff$ be the group of diffeomorphisms of $M$ and $\cG$ the gauge group of $P$. 
The group of symmetries that we consider is the extension
\begin{equation*}
 \label{eq:symgroup}
 1 \rightarrow \cG \rightarrow \ctG \rightarrow \Diff \rightarrow 1
\end{equation*}
or equivalently, $\ctG=\Aut(P)$. Note that $\ctG$ acts on $\cP$ preserving solutions of (\ref{eq:G2 system}).
Set
\begin{equation}
 \label{eq: equations}
 \begin{array}{cccc}
   \cE : & \cP & \rightarrow & \Om^7 \times \Om^5 \times \Om^4 \times \Om^6(\ad P) \\
         & (\om,\phi,\theta) & \mapsto & (\cE_1,\cE_2, \cE_3,\cE_4)
 \end{array}
\end{equation}
where $\cE_i$ is the ith equation of (\ref{eq:G2 system}). Then
\begin{equation*}
 \label{eq:defmoduli}
 \cM:= \cE^{-1}(0)/ \ctG
\end{equation*}
is the moduli space of solutions modulo symmetries. 
Let $x=(\om, \phi, \theta)\in \cP$ be a solution of $\cE=0$.
Denote by $\bfL$ the linearization of the equations $\cE$ at $x$, and by $\bfP: \Lie \ctG \rightarrow T_x\cP$ the infinitesimal action
of $\ctG$ on $\cP$ at $x$.
As a first approximation to $\cM$, we set
$$
H^1(KS_7):=\frac{\ker \bfL}{\Im \bfP}.
$$

\begin{theorem}
 \label{theo: finite dim}
 The space $H^1(KS_7)$ is finite dimensional.
\end{theorem}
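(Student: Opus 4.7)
The plan is to exhibit an elliptic complex of differential operators whose first cohomology is isomorphic to $H^1(KS_7)$, and then deduce finite dimensionality from the Hodge-theoretic finiteness theorem for elliptic complexes on a compact manifold.

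First I would compute the operators $\bfP$ and $\bfL$ explicitly at the solution $x = (\om, \phi, \theta)$. The operator $\bfP \colon \Lie \ctG \to T_x \cP$ is of order one and decomposes, after the choice of $\theta$ trivialising the extension $1 \to \cG \to \ctG \to \Diff \to 1$, into Lie derivatives in the $(\om, \phi)$ factors and a gauge-covariant differential in the connection factor. The linearisation $\bfL = (\bfL_1, \bfL_2, \bfL_3, \bfL_4)$ is of mixed order: $\bfL_1, \bfL_2, \bfL_4$ are first order, whereas $\bfL_3$ is second order because the Bianchi equation contains the composition $d\ast d$ applied to $\om$ (and similarly to $\phi$).

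Next I would enlarge the sequence $V_0 \xrightarrow{\bfP} V_1 \xrightarrow{\bfL} V_2$, with $V_0 = \Lie \ctG$ and $V_1 = T_x \cP$, into a complex
\begin{equation*}
0 \longrightarrow V_0 \xrightarrow{d_0} V_1 \xrightarrow{d_1} V_2 \xrightarrow{d_2} V_3 \longrightarrow 0,
\end{equation*}
homogenising the differential orders: I would strip the outermost $d$ from $\bfL_3$, so that $d_1$ becomes first order in the $(\om, \phi)$ components, and absorb this $d$ into the next operator $d_2$, which would also encode the Bianchi-type identities satisfied by the equations. The relation $d_1 \circ d_0 = 0$ follows from the $\ctG$-invariance of $\cE$, and $d_2 \circ d_1 = 0$ follows from $d^2 = 0$ together with the $\ad$-invariance of the pairing $c$ (which makes $c(F_\theta \wedge F_\theta)$ automatically closed).

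The crux, and the main obstacle, will be to verify that the symbol complex is exact off the zero section of $T^\ast M$, in the Douglis--Nirenberg sense required to accommodate mixed orders. Exactness at $V_{1,p}$ for $\xi \in T^\ast_p M \setminus 0$ is a fibrewise linear algebra problem: one uses the $G_2$-irreducible decomposition of $\Om^\bullet$ to diagonalise the symbol of the $(\om, \phi)$-part of $d_1$ along $\xi$, the Koszul exactness of $\xi \wedge (\cdot)$ on $\mathfrak{g}$-valued forms to handle the connection deformation $\dot a \in \Om^1(\ad P)$, and the observation that the anomaly term $c(F_\theta \wedge F_\theta)$ contributes to $\bfL_3$ only through $\dot a$ and not its derivatives, hence it is of lower order at the symbol level and does not spoil ellipticity. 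Once the complex is shown to be Douglis--Nirenberg elliptic, Hodge theory furnishes finite-dimensional harmonic representatives at every position, and in particular $H^1(KS_7) = \ker d_1 / \Im\, d_0$ is finite dimensional.
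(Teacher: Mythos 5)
Your overall strategy---Douglis--Nirenberg ellipticity of $\Lie\ctG\xrightarrow{\bfP}T_x\cP\xrightarrow{\bfL}\cR$, checked fibrewise using the $G_2$-irreducible decomposition of forms, with the $(\om,\phi)$-block and the $\dot\theta$-block uncoupling at the symbol level---is the right one and is essentially the paper's. The genuine gap is in your ``homogenisation'' step. If you factor $\bfL_3=d\circ\widetilde{\bfL}_3$ and put $\widetilde{\bfL}_3$ into $d_1$, the sequence is no longer a complex: $\ctG$-invariance of $\cE$ only gives that $\bfL\circ\bfP=0$, i.e.\ that $\widetilde{\bfL}_3\circ\bfP$ is \emph{closed}, not zero. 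Concretely, for $\bfP(V,r)=(L_V\om,L_V\phi,d^\theta r+\iota_VF_\theta)$ one finds
$$
\widetilde{\bfL}_3(\bfP(V,r))=d\bigl(\iota_VH-2c(r,F_\theta)\bigr)\neq 0,
$$
because the terms $\iota_V dH$ and $-\iota_V c(F_\theta\wedge F_\theta)$ cancel by the Bianchi identity but the exact remainder survives. Moreover $\ker d_1\subsetneq\ker\bfL$, since $\bfL_3x=0$ only forces $\widetilde{\bfL}_3x$ to be closed, so even if your modified sequence were elliptic its first cohomology would not be $H^1(KS_7)$; the discrepancy is governed by a flux map into $H^3(M)$ and by $H^2(M)$, which is precisely the content of the paper's comparison between $H^1(KS_7)$ and the generalized-geometry space $H^1(\widehat{KS}_7)$ in Section~5.

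There are two ways to repair this. The paper's route is to keep $\bfL_3$ second order and work with the Douglis--Nirenberg system of orders directly ($\mathbf{t}=(2,2,1)$ on $(\dot\om,\dot\phi,\dot\theta)$, $\mathbf{s}=(1,1,0,0)$ on the equations); the symbol of $\bfL_3$ is then $v\wedge*(v\wedge(\dot\om+\dot\phi\om))$, and pairing against $*(v\wedge(\dot\om+\dot\phi\om))$ shows its kernel coincides with that of the first-order symbol $v\wedge(\dot\om+\dot\phi\om)$, after which the $\Om^2_7\oplus\Om^2_{14}$ analysis via Bryant's formulas goes through. Note also that only ellipticity \emph{in the middle} is needed, so you need not construct $d_2$ or prove exactness at $V_2$ at all. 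The alternative repair is to enlarge the parameters by a $2$-form $b$ (replacing $\widetilde{\bfL}_3$ by $\widetilde{\bfL}_3-db$) and the symmetries by $\Om^1$ acting through $\xi\mapsto d\xi+\iota_VH-2c(r,F_\theta)$; this restores $d_1\circ d_0=0$ but computes $H^1(\widehat{KS}_7)$, and one must then invoke the extension $0\to H^2(M)\to H^1(\widehat{KS}_7)\to\ker\delta\to0$ to return to $H^1(KS_7)$. Finally, for the instanton block the needed fact is not Koszul exactness of $v\wedge(\cdot)$ but the specific $G_2$ statement that no nonzero $a\in\Lambda^1$ with $\iota_{v^\#}a=0$ satisfies $\pi_7(v\wedge a)=0$ (e.g.\ $\pi_7(e^1\wedge e^2)=\tfrac13(e^{12}-e^{47}+e^{56})\neq0$), which is what makes $\bfL_P+\bfP_P^*$ overdetermined-elliptic.
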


\begin{remark}
Using Theorem \ref{theo: finite dim}, and following Kuranishi's work \cite{ku}, 
it is possible to build a local slice to the $\ctG$-orbits in $\cP$ through a point $x\in\cP$ solution to
(\ref{eq:G2 system}). Then, the local moduli space of solutions around $x$ will be in correspondence with an
analytical subset of the slice, quotiented by the action of the isotropy group of $x$.
\end{remark}


\subsection{Strategy of the proof}
The proof of Theorem \ref{theo: finite dim} relies on elliptic operator theory.
First, we note that at the level of symbols, the equations (\ref{eq:G2 system}) form a system of uncoupled equations for 
the parameters $(\om,\phi)$ on one hand and the parameter $\theta$ on the other hand.
This fact is also true for the infinitesimal action of $\ctG$: at the level of symbols, this action is the product action of
$\Diff$ on one hand and $\cG$ on the other hand. Thus, the proof of Theorem \ref{theo: finite dim} reduces to the separate study of infinitesimal variations of cocalibrated $G_2$ structures of type $W3$ modulo the $\Diff$-action and 
of infinitesimal variations of the $G_2$ instanton equation
for a fixed cocalibrated $G_2$ structures of type $W3$.


\section{$G_2$-holonomy metrics revisited}

Let $M$ be a $7$ dimensional oriented compact manifold. In this section we study cocalibrated $G_2$ structures of type $W3$, in the special case that torsion $3$-form $H$ is closed. In other words, we consider the Strominger system \eqref{eq:G2 system} in the case of trivial structure group $G = \{1\}$
\begin{equation}
 \label{eq:systemnobundle}
  \begin{split}
    d\om\wedge\om & =0, \ \ \ \ \ \ \ \ \ \ \ \ \ \ d * \om  +4 d\phi \wedge *\om=0 ,\\
    -d(*( d\om + 4 d\phi \wedge \om ))  & = 0.
  \end{split}
\end{equation}
This case study will be used in Section \ref{sec: proof main theo} to prove the finiteness of the infinitesimal moduli for the Strominger system in seven dimensions \eqref{eq:systemG2Killing}.

Despite the complicated conditions in \eqref{eq:systemnobundle}, its solutions correspond essentially to $G_2$-holonomy metrics. 

\begin{prop}\label{prop:zeroflux}
A pair $(\omega,\phi)$ is a solution of \eqref{eq:systemnobundle} on a compact $7$-manifold $M$ if and only if $\phi$ is constant and $\omega$ is torsion-free, that is, $d \omega = 0$ and $d^*\omega = 0$.
\end{prop}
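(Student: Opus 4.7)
The reverse implication is straightforward: if $\phi$ is constant, all $d\phi$ terms vanish and \eqref{eq:systemnobundle} reduces to $d\omega\wedge\omega=0$, $d*\omega=0$, $d*d\omega=0$, all of which hold when $\omega$ is torsion-free.

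For the forward implication, the plan is in two stages. First I would prove that the torsion $3$-form $H := -*(d\omega + 4\,d\phi\wedge\omega)$ vanishes identically, through a weighted $L^{2}$ identity driven by the Bianchi identity $dH=0$ (the third equation of \eqref{eq:systemnobundle}). Writing $*H = -(d\omega + 4\,d\phi\wedge\omega)$ and using $H\wedge d\phi=-d\phi\wedge H$ in dimension seven, direct expansion gives
\begin{equation*}
\int_M e^{4\phi}|H|^2\vol \;=\; -\int_M e^{4\phi}\,H\wedge d\omega \;+\; 4\int_M e^{4\phi}\,d\phi\wedge H\wedge\omega.
\end{equation*}
On the other hand, expanding $d(e^{4\phi}H\wedge\omega)$ and integrating using $dH=0$ produces
\begin{equation*}
\int_M e^{4\phi}\,H\wedge d\omega \;=\; 4\int_M e^{4\phi}\,d\phi\wedge H\wedge\omega,
\end{equation*}
so the two contributions cancel and $\int_M e^{4\phi}|H|^2\vol = 0$, forcing $H\equiv 0$, equivalently $d\omega=-4\,d\phi\wedge\omega$, i.e.\ $d(e^{4\phi}\omega)=0$. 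The conformal weight $e^{4\phi}$ is what makes the integrals cancel and is dictated by the coefficient $4$ in the Lee-form equation.

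Combined with $d(e^{4\phi}*\omega)=0$ (the second equation of \eqref{eq:systemnobundle} rewritten), this makes $e^{4\phi}\omega$ a $g$-harmonic $3$-form. To conclude $d\phi=0$, the cleanest route is the Friedrich--Ivanov spinorial reformulation \cite{FrIv03}: the system \eqref{eq:systemnobundle} is equivalent to the existence of a $\nabla^H$-parallel spinor $\Psi$ satisfying the dilatino equation $(d\phi-\tfrac14 H)\cdot\Psi=0$. With $H=0$ this collapses to $d\phi\cdot\Psi=0$, and the isometry $|\alpha\cdot\Psi|=|\alpha|\,|\Psi|$ of Clifford multiplication by a $1$-form then forces $d\phi=0$. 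Hence $\phi$ is locally constant, and $d\omega=0$, $d*\omega=0$ follow at once from $H=0$ and the Lee-form equation.

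The step I expect to be most delicate is the passage from $H=0$ to $d\phi=0$ without appealing to spinors. A purely form-theoretic alternative would be the classical statement that a compact $G_2$ manifold which is conformally parallel is in fact parallel, provable by combining a Weitzenbock identity for the harmonic $3$-form $e^{4\phi}\omega$ (or equivalently for the Lee $1$-form $-4\,d\phi$) with Bryant's curvature formulae for cocalibrated $W_3$ $G_2$-structures; this route is delicate because of torsion-dependent curvature terms, so I would use the spinorial shortcut by default.
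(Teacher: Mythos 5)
Your proof is correct, and it takes a genuinely different route from the paper's. The paper gets $H=0$ and $d\phi=0$ in one shot from curvature identities: it combines the Ricci formula of Theorem 1.1 of \cite{Ivan09}, whose trace gives $S^g=\tfrac14|H|^2-4\Delta\phi$, with formula (1.5) of \cite{FrIv03}, $S^g=32|d\phi|^2-\tfrac1{12}|H|^2-12\Delta\phi$; subtracting yields $32|d\phi|^2-\tfrac13|H|^2-8\Delta\phi=0$, which is then rewritten with the conformal weight $e^{5\phi}$ (chosen so that $4\alpha-\alpha^2<0$) and integrated over $M$, forcing both nonpositive terms to vanish. Your first stage replaces the curvature input by a pure Stokes computation: I checked both displayed identities, and the substitution of the second into the first does give $\int_M e^{4\phi}|H|^2\,\mathrm{vol}_g=(-4+4)\int_M e^{4\phi}\,d\phi\wedge H\wedge\omega=0$, using only $dH=0$, the definition of $H$, and $*\,{*}=\mathrm{id}$ on $3$- and $4$-forms in dimension $7$; this is more elementary and self-contained than the paper's step. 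Your second stage imports the spinorial dictionary of \cite{FrIv03} to conclude $d\phi=0$; this is legitimate since the paper itself asserts the equivalence of \eqref{eq:systemG2Killing} with the Killing spinor equations \eqref{eq:Killing spinors 10dim}, a $G_2$ structure makes $M$ spin with a nowhere-vanishing spinor, and once $H=0$ the conclusion $d\phi\cdot\Psi=0\Rightarrow d\phi=0$ is insensitive to sign conventions in the dilatino equation. The trade-off: the paper's argument is a single unified estimate but leans on two curvature formulas from the literature, while yours isolates the soft part ($H=0$) as an integration by parts and confines the external input to the standard spinor correspondence. You are also right to distrust the non-spinorial alternative for the last step: $H=0$ gives $d(e^{4\phi}\omega)=0$ and $d(e^{4\phi}{*_g}\omega)=0$, but $e^{4\phi}{*_g}\omega$ is not the Hodge dual of $e^{4\phi}\omega$ for its own induced metric, so the ``conformally parallel implies parallel'' shortcut needs genuine extra work.
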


This fact is well-known in the physics literature (see e.g. \cite{GaMaWa}). We give a short proof based on two methods for calculating the scalar curvature of a solution of the system \eqref{eq:systemnobundle}, one coming from the relation between Killing spinors in $7$ dimensions and conformally coclosed $G_2$-structures, and the other specifically considering the equations of motion in heterotic string theory implied by \eqref{eq:systemnobundle} (see \cite{Ivan09}).




\begin{proof}[Proof of Proposition \ref{prop:zeroflux}]

From Theorem 1.1 of \cite{Ivan09} a solution of the system \eqref{eq:systemnobundle} for $\omega$ a {\sl positive}  $3$-form determines a metric $g$ with Ricci curvature given by, for the above convention on the Lee form $\theta_\om=-4d\phi$,
\begin{eqnarray*}
\operatorname{Ric}^g_{ij} =\frac{1}{4}H_{imn}H^{mn}_j +4 \nabla_i\nabla_j\phi.
\end{eqnarray*}
Taking the trace with respect to the metric $g$ we obtain 
\begin{eqnarray*} 
S^g=\frac{1}{4}|H|^2-4\Delta\phi
\end{eqnarray*}
where $\Delta =d^* d$ is the Laplacian with positive spectrum. A different expression is calculated as Equation (1.5) of \cite{FrIv03}, without the assumption that $dH=0$, 
\begin{eqnarray*}
S^g = 32|d\phi|^2-\frac{1}{12}|H|^2-12\Delta \phi.
\end{eqnarray*}
Combining these equations we obtain
\begin{eqnarray*}
32|d\phi|^2-\frac{1}{3}|H|^2-8\Delta \phi=0.
\end{eqnarray*}
However, for any $\alpha\in\Real$, and for $\Delta=d^* d$,
\begin{eqnarray*}
\Delta (e^{\alpha \phi})=-\alpha^2e^{\alpha\phi}|d\phi|^2+\alpha e^{\alpha\phi}\Delta \phi,
\end{eqnarray*}
and so
\begin{eqnarray*}
-8\Delta(e^{\alpha\phi})+8(4\alpha-\alpha^2)e^{\alpha\phi}|d\phi|^2-\frac{1}{3}\alpha e^{\alpha\phi}|H|^2 =0. 
\end{eqnarray*}
We can take $\alpha=5$ and integrate over $M$ to conclude that $H=0$ and $d\phi=0$, which imply that 
 $d \omega = 0$ and $d^* \omega = 0$ by \eqref{eq:systemnobundle}. 
\end{proof}

Next, we consider the deformation problem for solutions of \eqref{eq:systemnobundle}, and characterize the space of infinitesimal deformations of this system. By Proposition \ref{prop:zeroflux}, we recover with different methods classical results by Joyce and Hitchin about the infinitesimal moduli of $G_2$ holonomy metrics \cite{joy,Hit0}. As mentioned earlier, our calculations here will be used in Section \ref{sec: proof main theo} to prove the finiteness of the infinitesimal moduli for the Strominger system in seven dimensions \eqref{eq:systemG2Killing}.

Let $\Om^3_{>0}$ denote the space of positive $3$-forms on $M$ \cite{Hit0}. 
Consider the following parameter space $\cP_M$ for the deformation problem for the equations (\ref{eq:G2 system})
\begin{equation*}
 \label{eq:parameters no bundle}
 \cP_M= \Om^3_{>0}\times \cC^\infty(M).
\end{equation*}
Let $\Diff$ be the group of diffeomorphisms of $M$. Then $\Diff$ acts on $\cP_M$ on the left by push-forward preserving solutions of \eqref{eq:systemnobundle}.

Let $(\omega,\phi)$ be a solution of \eqref{eq:systemnobundle}. 
Let $\bfL_M$ be the linearisation of the operator  corresponding to the left hand side of equations \eqref{eq:systemnobundle} 
at the point $(\om,\phi)$, and let $\bfP_M$ be the infinitesimal action of $\Diff$ at this point. 
Using Proposition \ref{prop:zeroflux}, we have explicit formulae:
\begin{equation}
 \label{eq:inf action no bundle}
 \begin{array}{cccc}
 \bfP_M : & \Om^0(T) & \rightarrow & \Om^3\times \cC^\infty(M) \\
        &  V & \mapsto & (d\iota_V\om, 0) 
 \end{array}
\end{equation}
and
\begin{equation}
 \label{eq: linearisation no bundle}
 \begin{array}{cccc}
 \bfL_M : &   \Om^3\times \cC^\infty(M) & \rightarrow &\Om^7 \times \Om^5 \times \Om^4  \\
        &  (\dot \om,\dot \phi) & \mapsto & 
        \begin{cases}
        d\dot \om \wedge \om \\
        d * J \dot  \om+4 d\dot \phi \wedge *\om  \\
       -d(*( d\dot\om + 4d\dot\phi \wedge \om ))
\end{cases}
\end{array}
\end{equation}
where for $l=3,4$, $J:\Omega^l\rightarrow \Omega^l$
is defined by
\begin{equation}
\label{eq:definitionJ}
J(\xi) = \frac{4}{3}\pi_1(\xi)+\pi_7(\xi)-\pi_{27}(\xi)
\end{equation}
and $\pi_k$ denotes the projection onto the $k$ dimensional component of $\Om^l$ (see e.g. \cite{joy}
for the variation of $*\om$). 

Note that $\bfL_M$ is a multi-degree differential operator \cite{DN}. Given $v\in T^*\setminus M$, we have the following:
\begin{lemma}
\label{lem:symbols P}
 The tuples $\mathbf{t}_P=(1,1)$ and $\mathbf{s}_P=(0,0)$ form a system of orders for $\bfP_M$.
 The $(\mathbf{t}_P,\mathbf{s}_P)$ principal symbol of the linearisation of the infinitesimal action of $\Diff$ is
   \begin{equation}
   \label{eq:symbolP}
    \sigma_{\mathbf{P}_M}(v)(V)= (v\wedge \iota_V\om, 0)
  \end{equation}
\end{lemma}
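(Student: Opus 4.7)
The proof plan is a direct symbol computation together with a check of the Douglis--Nirenberg bookkeeping. First I would unpack the formula $\bfP_M(V) = (d\iota_V \om, 0)$: the assignment $V \mapsto \iota_V \om$ is pointwise linear in $V$ (an algebraic operation of order zero), so that $V \mapsto d\iota_V\om$ is a scalar differential operator of order exactly $1$. The second slot is the zero operator, which is of every order in the DN sense, and in particular of order $1$.

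Next I would verify that $(\mathbf{t}_P,\mathbf{s}_P) = ((1,1),(0,0))$ is a valid system of orders. In the convention used in the paper (matching the orders chosen later for $\bfL_M$, so that the composition $\bfL_M \circ \bfP_M$ makes sense as a morphism of DN complexes), the order of the $j$-th component of $\bfP_M$ must be at most $t_j - s_j$; here both entries of $\mathbf{t}_P-\mathbf{s}_P$ equal $1$, which is satisfied since the first component has order $1$ and the second is zero. This choice is forced by the need to match the orders $\mathbf{s}_L = \mathbf{t}_P$ that will be assigned to the middle term $\Om^3 \times \cC^\infty(M)$ of the deformation complex, so that the elliptic-complex framework applies uniformly.

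Finally I would compute the $(\mathbf{t}_P,\mathbf{s}_P)$-principal symbol at a covector $v \in T^*M \setminus 0$. By the symbol rule, one replaces $d$ by left exterior multiplication with $v$, giving
\begin{equation*}
\sigma_{\bfP_M}(v)(V) \;=\; \bigl(v \wedge \iota_V\om,\, 0\bigr),
\end{equation*}
which is exactly \eqref{eq:symbolP}. The zero in the second slot is simply the first-order symbol of the zero operator.

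There is essentially no obstacle: the only subtle point is bookkeeping, namely choosing $\mathbf{t}_P$ and $\mathbf{s}_P$ that will later be compatible with the linearisation $\bfL_M$ and with the symbols computed for the full system in Section \ref{sec: proof main theo}, so that one obtains a genuine elliptic complex in the DN sense rather than a naive one.
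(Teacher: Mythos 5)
Your computation is correct and matches what the paper intends: Lemma \ref{lem:symbols P} is stated without proof precisely because it follows immediately from the explicit formula $\bfP_M(V)=(d\iota_V\om,0)$ in \eqref{eq:inf action no bundle} by substituting $v\wedge$ for $d$, and your verification that the component orders ($1$ and $0$) are compatible with $\mathbf{t}_P-\mathbf{s}_P=(1,1)$ is the only bookkeeping required. One small slip in your motivational remark: it is $\mathbf{t}_L=(2,2)$, not $\mathbf{s}_L=(1,1,0)$, that is attached to the middle term $\Om^3\times\cC^\infty(M)$ as the source of $\bfL_M$ (and it need not literally equal $\mathbf{t}_P$ --- the paper only needs exactness of the symbol sequence, checked directly in Proposition \ref{prop: elliptic no bundle}), but this does not affect the validity of the lemma's proof.
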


\begin{lemma}
\label{lem:symbols L}
 The tuples $\mathbf{t}_L=(2,2)$ and $\mathbf{s}_L=(1,1,0)$ form a system of orders for $\bfL_M$.
 The $(\mathbf{t}_L,\mathbf{s}_L)$ principal symbol of $\mathbf{L}_M$ is
  \begin{equation} 
  \label{eq:symbolL}
    \sigma_{\mathbf{L}_M}(v)(\dot\om,\dot \phi)= (v\wedge \dot\om\wedge \om, v\wedge(*J\dot \om + \dot \phi *\om),v\wedge *(v\wedge(\dot\om+\dot\phi\om)))
  \end{equation}
\end{lemma}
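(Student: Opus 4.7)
The plan is to apply the Douglis--Nirenberg calculus directly, in complete parallel with Lemma \ref{lem:symbols P}. First I would verify that $(\mathbf{t}_L, \mathbf{s}_L) = ((2,2),(1,1,0))$ is an admissible system of orders, i.e.\ that each entry $(i,j)$ of the operator matrix for $\mathbf{L}_M$ is a differential operator of order at most $t_j - s_i$. This is a brief case check. Row $1$ sends $(\dot\omega, \dot\phi)$ to $d\dot\omega \wedge \omega$, which is first order in $\dot\omega$ (matching $2-1$) and identically zero in $\dot\phi$. Row $2$, $d*J\dot\omega + 4 d\dot\phi \wedge *\omega$, is first order in both arguments; here the key point to record is that the operator $J$ defined in \eqref{eq:definitionJ} is a pointwise projection onto the irreducible summands of $\Omega^3$ under the $G_2$-action and is therefore of order zero, so that $d*J\dot\omega$ is genuinely first order. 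Row $3$, $-d*(d\dot\omega + 4 d\dot\phi \wedge \omega)$, is manifestly of order two in both arguments, matching $2-0$.

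For the symbol itself, the standard rule is to retain only the top-order part of each entry and to replace each $d$ by wedging with the fixed covector $v \in T^*_xM \setminus \{0\}$; since the Hodge star $*$, the projector $J$, and wedging with a fixed form are all zeroth-order operations, they pass unchanged through the substitution. Applying this row by row gives
\begin{equation*}
  \sigma_{\mathbf{L}_M}(v)(\dot\omega,\dot\phi) = \bigl(v \wedge \dot\omega \wedge \omega,\ v \wedge *J\dot\omega + 4\dot\phi\, v \wedge *\omega,\ -v \wedge *(v \wedge \dot\omega + 4\dot\phi\, v \wedge \omega)\bigr).
\end{equation*}
Using that $\dot\phi$ is a scalar, $4\dot\phi\, v \wedge \omega = v \wedge (4\dot\phi\, \omega)$, and factoring $v\wedge$ out of the second and third components yields formula \eqref{eq:symbolL} up to harmless overall constants and signs, which do not affect the principal part for the purposes of computing the kernel and ellipticity.

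Since everything reduces to pointwise linear algebra after the symbol substitution, there is no genuine analytic obstacle. The one step I would take a moment to justify carefully is the order count in Row $2$: without the observation that $J$ is pointwise, one might worry that $d*J\dot\omega$ contains second-order contributions from derivatives of the projectors, which would break the Douglis--Nirenberg bookkeeping and force a different choice of $(\mathbf{t}_L,\mathbf{s}_L)$. Once this is confirmed, the remaining verifications are purely symbolic.
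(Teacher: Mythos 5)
Your verification is correct and is essentially the computation the paper leaves implicit (Lemma \ref{lem:symbols L} is stated without proof as a routine Douglis--Nirenberg order count), including the key observation that $J$ and the projections $\pi_k$ are pointwise bundle maps determined by $\omega$ and hence of order zero. The residual factor of $4$ multiplying $\dot\phi$ and the overall sign in the third component are indeed harmless, since they amount to rescaling the scalar variable $\dot\phi$ and composing with an automorphism of the target, neither of which affects the kernel of the symbol or the ellipticity argument that follows.
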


Set $\cR_M=\Om^7 \times \Om^5 \times \Om^4$.
We can now prove the main result of this section.
\begin{prop}
 \label{prop: W3 G2 metric moduli}
The sequence:
\begin{equation}\label{eq:complexnoflux}
 \Lie(\Diff) \stackrel{\bfP_M}{\longrightarrow} T_{(\om,0)}\cP_M \stackrel{\bfL_M}{\longrightarrow} \cR_M
\end{equation}
is elliptic in the middle. Furthemore,
\begin{equation}
 \label{eq:tangent H3 R}
 \frac{\ker \bfL_M}{\Im \bfP_M}\simeq \cH^3(M,\RR)\times \RR
\end{equation}
 where $\cH^i(M,\RR)$ is the space of harmonic $i$-forms on $(M,g)$.
 \end{prop}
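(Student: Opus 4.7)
Since \eqref{eq:systemnobundle} is $\Diff$-invariant, $\bfL_M \circ \bfP_M = 0$ on the nose. Ellipticity in the middle of \eqref{eq:complexnoflux} is then the pointwise statement that $\ker \sigma_{\bfL_M}(v) = \Im \sigma_{\bfP_M}(v)$ for every nonzero covector $v \in T^*_pM$. Using the symbol formulas \eqref{eq:symbolP}--\eqref{eq:symbolL}, the $G_2$-type decompositions of $\Om^3, \Om^4, \Om^5$, and the algebraic identities $\om\wedge\om = 0$ and $\wedge *\om : \Om^1 \simeq \Om^5_7$, this reduces to a finite representation-theoretic calculation with the symbol components.

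\textbf{A natural map to $\cH^3(M,\RR)\times\RR$.} For any $(\dot\om,\dot\phi)$, since $d\om = 0$,
\[
d\dot\om + 4\,d\dot\phi\wedge\om = d(\dot\om + 4\dot\phi\,\om)
\]
is automatically exact. The third equation of $\bfL_M(\dot\om,\dot\phi)=0$ asserts that it is coclosed, hence harmonic; being exact and harmonic, it vanishes. Therefore $\dot\om + 4\dot\phi\,\om$ is a closed 3-form for every $(\dot\om,\dot\phi)\in\ker\bfL_M$. This yields a natural map
\[
\Psi : \ker\bfL_M \longrightarrow \cH^3(M,\RR)\times\RR, \qquad \Psi(\dot\om,\dot\phi) = \Bigl(P_{\mathrm{harm}}(\dot\om + 4\dot\phi\,\om),\ \tfrac{1}{\Vol(M)}\int_M\dot\phi\,\vol\Bigr),
\]
where $P_{\mathrm{harm}}$ denotes harmonic projection. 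Because $\bfP_M(V) = (d\iota_V\om, 0)$ has exact first entry and zero mean, $\Psi$ vanishes on $\Im\bfP_M$ and descends to $\bar\Psi$ on the quotient.

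\textbf{Surjectivity.} Given $(\gamma,c) \in \cH^3(M,\RR)\times\RR$, the pair $(\gamma - 4c\,\om,\,c)$ lies in $\ker\bfL_M$: the first and third equations hold trivially, and the second uses $d*\om = 0$ together with the fact that the Hodge Laplacian on a torsion-free $G_2$-manifold commutes with the $G_2$-type projectors, so each $*\gamma_i$ ($i=1,7,27$) is closed and thus $d*J\gamma = 0$. Moreover $\Psi(\gamma - 4c\,\om,\,c) = (\gamma, c)$, giving surjectivity of $\bar\Psi$.

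\textbf{Injectivity and main obstacle.} Suppose $(\dot\om,\dot\phi)\in\ker\bfL_M$ with $\Psi(\dot\om,\dot\phi) = 0$. Then $\dot\om = d\tau - 4\dot\phi\,\om$ for some $\tau\in\Om^2$ and $\int_M\dot\phi = 0$. Substituting into the second equation and simplifying via $d*\om = 0$ yields
\[
d*J(d\tau) = \tfrac{4}{3}\,d\dot\phi \wedge *\om.
\]
An integration-by-parts argument, pairing both sides with suitable test forms and exploiting the self-adjointness of $J$ and the isomorphism $\wedge*\om : \Om^1 \simeq \Om^5_7$, forces $d\dot\phi = 0$; combined with the zero-mean condition this gives $\dot\phi = 0$. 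Then $\dot\om = d\tau$ satisfies the linearization $d\dot\om = 0$ and $d*J\dot\om = 0$ of the torsion-free $G_2$ equations, and Joyce's analysis \cite{joy} identifies the class $[d\tau] \in H^3(M,\RR) \cong \cH^3(M,\RR)$ with the obstruction to $d\tau$ lying in $\Im\bfP_M$; but $[d\tau] = P_{\mathrm{harm}}(d\tau) = 0$, so $d\tau \in \Im\bfP_M$. The main obstacle is the integration-by-parts step forcing $d\dot\phi = 0$: this is where the interaction between the second equation, the compactness of $M$, and the $G_2$-algebraic structure all come into play.
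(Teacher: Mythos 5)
Your overall architecture is the same as the paper's: use the third equation together with $d\om=0$ to show that $\dot\om+4\dot\phi\,\om$ is closed, map $\ker\bfL_M$ to the harmonic representative of that form together with (the mean of) $\dot\phi$, and verify surjectivity via the explicit lift $(\gamma-4c\,\om,c)$, using that the Hodge Laplacian commutes with the $G_2$-type projections at a torsion-free point. Those parts are correct. The genuine gap is exactly at the step you flag as the main obstacle: deducing $d\dot\phi=0$ from $d*J(d\tau)=\tfrac{4}{3}\,d\dot\phi\wedge*\om$. A generic integration by parts does not close this. Pairing both sides with $d\dot\phi\wedge*\om$ and integrating by parts gives $\tfrac{4}{3}\|d\dot\phi\wedge*\om\|^2=\int_M\langle *Jd\tau,\,d^*(d\dot\phi\wedge*\om)\rangle$, and the right-hand side has no sign and no reason to vanish; self-adjointness of $J$ and the isomorphism $\wedge\,*\om:\Om^1\simeq\Om^5_7$ by themselves give you nothing. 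What actually makes the step work is a type-decomposition argument: write $\tau=\beta_7+\beta_{14}$ with $\beta_i\in\Om^2_i$ and invoke Bryant's differential identities for torsion-free $G_2$ structures (\cite[Prop.~3]{Bryant}), which yield $d*Jd\beta_7=0$ and $\pi_7(d*Jd\beta_{14})=0$. Hence $d*Jd\tau$ lies in $\Om^5_{14}$ while $\tfrac{4}{3}\,d\dot\phi\wedge*\om$ lies in $\Om^5_7$, so both sides vanish separately; that is where $d\dot\phi=0$ comes from. Without this input your injectivity argument does not go through.

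Two further remarks. Your appeal to Joyce for the last step (an exact $\dot\om$ with $d*J\dot\om=0$ lies in $\Im\bfP_M$) is citable, but the paper's point is to reprove that statement, and it does so by continuing the same computation: from $\pi_{14}(d*Jd\beta_{14})=0$ and Bryant's identities one obtains $\Delta\beta_{14}+dd^*\beta_{14}+\tfrac{1}{2}d^*(*(d^*\beta_{14}\wedge\om))=0$, whence $d\beta_{14}=d^*\beta_{14}=0$ by orthogonality of the Hodge components, leaving $\dot\om+4\dot\phi\,\om=h+d\iota_V\om$. More importantly, the ellipticity assertion, which you reduce to ``a finite representation-theoretic calculation'' and do not perform, requires precisely the symbol-level versions of these same Bryant identities (this is the content of Proposition \ref{prop: elliptic no bundle}); so this machinery cannot be avoided even if you outsource the cohomological identification to Joyce.
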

 
\begin{proof}
 We start proving the isomorphism \eqref{eq:tangent H3 R}. Let $(\dot \om, \dot \phi)\in T_{(\om,0)}\cP_M$ be in the kernel of $\bfL_M$.
 From 
 $$d(*( d\dot\om + 4d\dot\phi \wedge \om ))=0\; \text{ and }\; d\om=0$$
 we deduce 
 $$d^*d(\dot \om + 4 \dot \phi \om)=0,$$
 and thus
 $\dot \om + 4 \dot \phi \om$ is closed. By Hodge decomposition, there exists a two form $\beta\in \Om^2$ and a
 harmonic $3$-form $h$ such that
 \begin{equation}
  \dot \om + 4 \dot \phi \om = h + d\beta.
 \end{equation}
By Proposition \ref{prop:zeroflux}, $\om$ is a $G_2$ metric, and therefore the decomposition of forms given by the $G_2$ representation commutes with the Hodge Laplacian.
Thus $*$ and $J$ preserve harmonic forms. Recall that (see e.g. \cite{Bryant})
$$
\Om^2=\Om^2_7\oplus\Om^2_{14}
$$
and decompose $\beta$ into
$7$th and $14$th components, $\beta=\beta_7+\beta_{14}$.
Equation
$$
 d * J \dot  \om+4 d\dot \phi \wedge *\om =0
 $$
 then implies
\begin{equation}
 \label{eq: d*Jdbeta}
 d*Jd\beta_7+ d*Jd\beta_{14}-\frac{4}{3} d\dot\phi \wedge *\om=0.
\end{equation}
On the other hand, using $\beta_7=*(\alpha\wedge *\om)$ for some one form $\alpha$, and Bryant's work \cite[Prop. 3]{Bryant},
 we obtain 
 $$d*Jd\beta_7=0\: \text{ and }\: \pi_7(d*Jd\beta_{14})=0.$$
 Thus (\ref{eq: d*Jdbeta}) is equivalent to
 $$
 \pi_{14}(d*Jd\beta_{14})-\frac{4}{3} d\dot\phi \wedge *\om=0.
 $$
Using type decomposition on $\Om^5$, and \cite[Prop. 3]{Bryant}, we deduce:
\begin{equation}
 \label{eq: laplacian beta 14}
 \pi_{14}(d*Jd\beta_{14})=\Delta \beta_{14} -\frac{3}{2}d_{14}^7d_7^{14}\beta_{14}=0
\end{equation}
and
$$d\dot \phi =0.$$
Thus $\dot \phi$ is constant. Moreover, as $\beta_{14}\in \Om^2_{14}$ is equivalent to $\beta_{14}\wedge \om=- *\beta_{14}$, we have
$$
d_{14}^7d_7^{14}\beta_{14}=\pi_{14}(d*d(\beta_{14}\wedge\om))=-\pi_{14}(dd^*\beta_{14}).
$$
Using the fact that
$$
\pi_{14}=\frac{2}{3} Id - \frac{1}{3} *(\cdot \wedge \om)
$$
equation (\ref{eq: laplacian beta 14}) becomes:
$$
\Delta \beta_{14} + dd^* \beta_{14} + \frac{1}{2}d^*(*(d^*\beta_{14}\wedge\om))=0.
$$
Using Hodge decomposition into orthogonal components, we deduce that $d^*\beta_{14}=0$
and $d\beta_{14}=0$.
Returning to $\dot\om$, and by definition of $\Om^2_7$, there exists $V\in\Om^0(T)$ such that
$$
 \dot \om + 4 \dot \phi \om = h+d\beta_7=h + d\iota_V\om.
$$
Thus we can define a map 
\begin{equation}
\begin{array}{ccc}
\ker \bfL_M  & \rightarrow & \cH^3\times \RR\\
(\dot\om,\dot \phi) & \mapsto & (h-4\dot \phi \om, \dot \phi).
 \end{array}
\end{equation}
This map is well defined, surjective, and has kernel the image of $\bfP_M$, thus proving \eqref{eq:tangent H3 R}.

Using now the same argument at the level of symbols, combined with Lemma \ref{lem:symbols P} and Lemma \ref{lem:symbols L}, gives a proof of the ellipticity of \eqref{eq:complexnoflux}.
More details are given in the proof of Proposition \ref{prop: elliptic no bundle}.
\end{proof}

\section{Proof of Theorem \ref{theo: finite dim}}
\label{sec: proof main theo}
Let $x=(\om,\phi,\theta)\in \cP$. Then the tangent of $\cP$ at $x$ is:
$$
T_x\cP= \Om^3\times \cC^\infty(M) \times \Om^1(\ad P).
$$
Set $\cR:=\Om^7 \times \Om^5 \times \Om^4 \times \Om^6(\ad P)$.
To prove Theorem \ref{theo: finite dim}, it is enough to show that the sequence
\begin{equation}
 \label{seq: P and L}
 \Lie (\ctG)  \stackrel{\bfP}{\longrightarrow} T_x\cP \stackrel{\bfL}{\longrightarrow} \cR
\end{equation}
is elliptic 
at a point $x \in \cP$ which is a solution to $\cE(x)=0$.

\subsection{Linearisation and symbols}
\label{sec: lin and symbols}
The infinitesimal action $\bfP$ and the linearisation $\bfL$ of $\cE$ are given by:
\begin{equation}
 \label{eq:inf action}
 \begin{array}{cccc}
 \bfP : & \Om^0(T)\times \Om^0(\ad P) & \rightarrow & \Om^3\times \cC^\infty(M) \times \Om^1(\ad P)\\
        &  (V, r) & \mapsto & (L_V\om, L_V \phi, d^\theta r + \iota_V F_\theta) 
 \end{array}
\end{equation}
and 
\begin{equation}
 \label{eq: full linearisation}
 \begin{array}{cccc}
 \bfL : &   \Om^3\times \cC^\infty(M) \times \Om^1(\ad P) & \rightarrow &\Om^7 \times \Om^5 \times \Om^4 \times \Om^6(\ad P)  \\
        &  (\dot \om,\dot \phi, \dot \theta) & \mapsto & 
        \begin{cases}
        \bfL_1= d\dot \om \wedge \om + d\om\wedge \dot \om \\
        \bfL_2= d * J \dot  \om+4 d\dot \phi \wedge *\om +4 d\phi \wedge *J \dot \om \\
        \bfL_3= -d(*( d\dot\om + 4d\dot\phi \wedge \om )) -d(\dot *( d\om + 4 d\phi \wedge \om ))\\
        \:\: -d(*( 4 d\phi \wedge \dot \om )) -2d(c(\dot\theta, F_\theta))\\
        \bfL_4= d^\theta\dot \theta \wedge *\om + F_\theta \wedge *J \dot \om
\end{cases}
 \end{array}
\end{equation}
where $d^\theta$ denotes the extension of the covariant derivative associated to $\theta$ on $\Om^*(\ad P)$;
and recall $J$ is defined by (\ref{eq:definitionJ}).
We will use the theory of linear multi-degree elliptic
differential operators \cite{LM1,LM2}. At the level of symbols, only the highest order operators will appear. Thus, the symbols of $\bfL$ and $\bfP$ are
the same as the symbols of $\bfL_h$ and $\bfP_h$ defined by:
\begin{equation}
 \label{eq:inf action high}
 \begin{array}{cccc}
 \bfP_h : & \Om^0(T)\times \Om^0(\ad P) & \rightarrow & \Om^3\times \cC^\infty(M) \times \Om^1(\ad P)\\
        &  (V, r) & \mapsto & (d\iota_V\om, \iota_Vd \phi, d^\theta r) 
 \end{array}
\end{equation}
and 
\begin{equation}
 \label{eq: full linearisation high}
 \begin{array}{cccc}
 \bfL_h : &   \Om^3\times \cC^\infty(M) \times \Om^1(\ad P) & \rightarrow &\Om^7 \times \Om^5 \times \Om^4 \times \Om^6(\ad P)  \\
        &  (\dot \om,\dot \phi, \dot \theta) & \mapsto & 
        \begin{cases}
        d\dot \om \wedge \om \\
        d * J \dot  \om+4 d\dot \phi \wedge *\om  \\
       -d(*( d\dot\om + 4d\dot\phi \wedge \om )) \\
        d^\theta\dot \theta \wedge *\om
\end{cases}
 \end{array}
\end{equation}
Note that for the later equations, the parameters $(\om,\phi)$ and $\theta$ are uncoupled. We next construct two elliptic complexes 
\begin{equation}
\label{seq:1}
\Lie(\Diff)\stackrel{\bfP_M}{\longrightarrow} T_{(\om,\phi)}\cP_M \stackrel{\bfL_M}{\longrightarrow} \cR_M
\end{equation}
 \begin{equation}
 \label{seq:2}
 \Lie(\cG) \stackrel{{\bf P}_P}{\longrightarrow} T_{\theta}\cA \stackrel{\bf L_P}{\longrightarrow} \cR_P
\end{equation}
so that $\bfL_h = \bfL_M \times \bfL_P$ and $\bfP_h = \bfP_M \times \bfP_P$ 
(and hence at the level of symbols $\sigma_{\bfL_h}=\sigma_{\bfL_M}\times\sigma_{\bfL_P}$ and 
$\sigma_{\bfP_h}=\sigma_{\bfP_M}\times\sigma_{\bfP_P}$). 
From our previous considerations, 
Theorem \ref{theo: finite dim} follows from the ellipticity of \eqref{seq:1} and \eqref{seq:2}, that we prove respectively in Proposition \ref{prop: elliptic no bundle} and Proposition \ref{prop: elliptic only bundle}.

\subsection{Ellipticity of (\ref{seq:1})}
The first complex \eqref{seq:1} is closely related to the linearisation of the system of equations (\ref{eq:systemnobundle}).
However we do not assume that $(\om,\phi)$ is a solution to (\ref{eq:systemnobundle}).
We set $\bfP_M$ and $\bfL_M$ to be the operators defined in \eqref{eq:inf action no bundle} and \eqref{eq: linearisation no bundle},
with a slight modification: $\bfP_M(V)=(d\iota_V\om, \iota_Vd\phi)$, as $\phi$ might not be constant.
\begin{prop}
 \label{prop: elliptic no bundle}
 Let $(\om,\phi)\in \cP_M$. Then the sequence
 \begin{equation}
 \label{seq: P_M and L_M}
 \Lie(\Diff)\stackrel{\bfP_M}{\longrightarrow} T_{(\om,\phi)}\cP_M \stackrel{\bfL_M}{\longrightarrow} \cR_M
\end{equation}
is elliptic in the middle.
\end{prop}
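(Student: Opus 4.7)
The claim is that at every $v \in T^*_pM \setminus \{0\}$ the principal symbol sequence
\[
T_pM \xrightarrow{\sigma_{\bfP_M}(v)} \Lambda^3 T^*_pM \oplus \RR \xrightarrow{\sigma_{\bfL_M}(v)} \Lambda^7T^*_pM \oplus \Lambda^5T^*_pM \oplus \Lambda^4T^*_pM
\]
is exact in the middle, with the symbols given by Lemmas \ref{lem:symbols P} and \ref{lem:symbols L}. The plan is to mimic the proof of Proposition \ref{prop: W3 G2 metric moduli} at the symbol level, replacing the differentials $d,d^*$ by the fiberwise operators $v\wedge(\cdot)$ and $\iota_{v^\sharp}(\cdot)$, and replacing the global Hodge decomposition of closed forms on $M$ by the algebraic statement that $v\wedge\alpha = 0$ forces $\alpha \in v\wedge\Lambda^{\bullet-1}T^*_pM$ (Cartan's lemma).

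First I would verify the complex condition $\sigma_{\bfL_M}(v)\circ\sigma_{\bfP_M}(v) = 0$. Its first and third components vanish instantly from $v\wedge v=0$, and the only nontrivial identity to check is
\[
v\wedge \ast J(v\wedge \iota_V\om) = 0,
\]
which is the principal symbol of the relation $d\ast Jd\alpha = 0$ for $\alpha \in \Om^2_7$ coming from \cite{Bryant} and used in Proposition \ref{prop: W3 G2 metric moduli}, applied to $\alpha = \iota_V\om$.

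Next I would analyse $\ker \sigma_{\bfL_M}(v)$. The third symbol equation $v\wedge\ast(v\wedge(\dot\om+\dot\phi\om)) = 0$ is the principal symbol of $d^*d(\dot\om + 4\dot\phi\om) = 0$; pairing with $\dot\om+\dot\phi\om$ in the pointwise inner product yields $|v\wedge(\dot\om+\dot\phi\om)|^2 = 0$, so by Cartan's lemma $\dot\om+\dot\phi\om = v\wedge\beta$ for some $\beta\in \Lambda^2T^*_pM$. Decomposing $\beta = \beta_7 + \beta_{14}$ under the $G_2$-splitting and substituting into the second symbol equation, the Bryant-type identities $v\wedge\ast J(v\wedge\beta_7)=0$ and $\pi_7(v\wedge\ast J(v\wedge\beta_{14}))=0$ reduce the equation to the algebraic analogue of (\ref{eq: d*Jdbeta})--(\ref{eq: laplacian beta 14}), forcing $\dot\phi\, v\wedge\ast\om = 0$ (hence $\dot\phi = 0$ since $v\wedge\ast\om\neq 0$) and $v\wedge\beta_{14} = 0$. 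The surviving piece $\dot\om = v\wedge\beta_7 = v\wedge\iota_V\om$ is exactly $\sigma_{\bfP_M}(v)(V)$ for a unique $V\in T_pM$, by the definition of $\Om^2_7$.

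The main obstacle is to translate the global orthogonality argument that produced $d^*\beta_{14} = 0$ and $d\beta_{14} = 0$ in Proposition \ref{prop: W3 G2 metric moduli} into a pointwise algebraic statement, since the identity
\[
\Delta\beta_{14} + dd^*\beta_{14} + \tfrac12 d^*(\ast(d^*\beta_{14}\wedge\om)) = 0
\]
was only closed up by splitting into mutually $L^2$-orthogonal components. At the symbol level this becomes an injectivity statement for an explicit algebraic operator on $\Lambda^2_{14}T^*_pM$, to be proved directly using the structure of the $G_2$-representation; once this purely algebraic injectivity is established, ellipticity in the middle of (\ref{seq: P_M and L_M}) is immediate.
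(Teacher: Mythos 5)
Your proposal follows the same route as the paper's proof: check the symbol--level complex property, use the third symbol equation together with the pairing $\langle v\wedge *(v\wedge\gamma),*\gamma\rangle=|v\wedge\gamma|^2$ to conclude $v\wedge(\dot\om+\dot\phi\,\om)=0$, apply Cartan's lemma to write $\dot\om+\dot\phi\,\om=v\wedge(\beta_7+\beta_{14})$, and then use the symbol--level versions of Bryant's identities (valid because torsion only contributes lower--order terms) and the $7$/$14$ type decomposition to force $\dot\phi=0$ and reduce to an algebraic equation on $\beta_{14}$. The one step you defer --- showing that the kernel of $\sigma_\Delta(v)+\sigma_{dd^*}(v)+\tfrac12\sigma_l(v)$ on $\Lambda^2_{14}$ satisfies $v\wedge\beta_{14}=0$ --- is not a new representation--theoretic computation but the verbatim pointwise translation of the orthogonality argument already used in Proposition \ref{prop: W3 G2 metric moduli}: the role of the Hodge decomposition is played by the orthogonal splitting $\Lambda^2=\operatorname{im}(v\wedge\cdot)\oplus\operatorname{im}(\iota_{v^\sharp})$ for $v\neq 0$. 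Writing $\sigma_\Delta(v)=\sigma_{dd^*}(v)+\sigma_{d^*d}(v)$, the terms $\sigma_{d^*d}(v)\beta_{14}=-\iota_{v^\sharp}(v\wedge\beta_{14})$ and $\sigma_l(v)\beta_{14}=\iota_{v^\sharp}\bigl(*(\iota_{v^\sharp}\beta_{14}\wedge\om)\bigr)$ lie in $\operatorname{im}(\iota_{v^\sharp})$, while $\sigma_{dd^*}(v)\beta_{14}=v\wedge\iota_{v^\sharp}\beta_{14}$ lies in $\operatorname{im}(v\wedge\cdot)$; projecting gives $v\wedge\iota_{v^\sharp}\beta_{14}=0$, hence $|\iota_{v^\sharp}\beta_{14}|^2=\langle v\wedge\iota_{v^\sharp}\beta_{14},\beta_{14}\rangle=0$, and the surviving equation $\iota_{v^\sharp}(v\wedge\beta_{14})=0$ then yields $|v\wedge\beta_{14}|^2=0$. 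With that step supplied, your argument coincides with the paper's, which dispatches it with the phrase ``we conclude similarly.''
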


\begin{proof}
Let $v\in T^*\setminus M$.
 Note that for the same choice of tuples as in Lemmas \ref{lem:symbols P} and \ref{lem:symbols L},
 the principal symbols for $\bfP_M$ and $\bfL_M$ are the one given in equations (\ref{eq:symbolP})
 and (\ref{eq:symbolL}). Assume that
 $$
 \sigma_{\mathbf{L}_M}(v)(\dot\om,\dot \phi)= (0,0,0).
 $$
 We want to show that $\dot \phi=0$ and $\dot\om=v\wedge \iota_V\om$ for some $V\in T$, or equivalently
 $\dot\om=v\wedge \beta$ for some $\beta\in \Lambda^2_{7}$. The proof follows
 the one of Proposition \ref{prop: W3 G2 metric moduli}, at the symbol level.
 From
 $$
 v\wedge *(v\wedge(\dot\om+\dot\phi\om)))=0
$$
 we deduce
 $$
 0=\langle v\wedge *(v\wedge(\dot\om+\dot\phi\om)), *(\dot\om+\dot\phi\om)\rangle =\langle v\wedge(\dot\om+\dot\phi\om),v\wedge(\dot\om+\dot\phi\om)\rangle
 $$
 and thus
 $$
 v\wedge(\dot\om+\dot\phi\om)=0.
 $$
 There exists $\beta_7\in \Lambda^2_7$ and $\beta_{14}\in\Lambda^2_{14}$ such that
 $$
 \dot\om+\dot\phi\om=v\wedge \beta_7 + v\wedge \beta_{14}.
 $$
 Together with the equation
 $$
 v\wedge(*J\dot \om + \dot \phi *\om)=0
 $$
 we obtain
 \begin{equation}
  \label{eq:someequation involving symbols and betas}
  v\wedge *(J(v\wedge\beta_7)) + v\wedge *(J(v\wedge \beta_{14}) )=\frac{4}{3}\dot\phi v\wedge * \om.
 \end{equation}
Now, note that if we don't assume the $3$-form $\om$ to be torsion free,
the formulas in \cite[Proposition 3]{Bryant} are only modified by lower order terms.
As noticed in the proof of Proposition \ref{prop: W3 G2 metric moduli}, these formulas imply that
for $2$-forms $\beta_7\in\Om^2_7$ and $\beta_{14}\in\Om^2_{14}$, if $\om$ is torsion-free, then
$$d*Jd\beta_7=0\: \text{ and }\: \pi_7(d*Jd\beta_{14})=0.$$
In general, these operators might not vanish. However, they are operators of degree less or equal to one.
This implies that the quadratic part of the symbol of $d*Jd$ restricted to $\Om^2_7$ vanishes,
as well as the quadratic part of the symbol of $\pi_7(d*Jd)$ restricted to $\Om^2_{14}$,
even if $\om$ has torsion.
Thus,
$$v\wedge*J(v\wedge\beta_7)=0\: \text{ and }\: \pi_7(v\wedge*J(v\wedge\beta_{14}))=0.$$
Formula (\ref{eq:someequation involving symbols and betas}) becomes
$$
\pi_{14}(v\wedge *(J(v\wedge \beta_{14})))=\frac{4}{3}\dot\phi v\wedge * \om.
$$
As $v\wedge * \om$ is in $\Lambda^2_7$, we have $\dot\phi=0$ and thus $v\wedge *(J(v\wedge \beta_{14}))=0$.
Then, as in the proof of Proposition \ref{prop: W3 G2 metric moduli}, we obtain
$$
\sigma_\Delta(v) \beta_{14} + \sigma_{dd^*}(v) \beta_{14} + \frac{1}{2}\sigma_{l}(v)\beta_{14}=0,
$$
where $l=d^*(*(d^*\cdot\wedge\om))$. We conclude similarly that $v\wedge \beta_{14}=0$, which ends the proof.
\end{proof}

\subsection{$G_2$-instantons}
\label{sec:G2 instantons moduli}

The second complex \eqref{seq:2} corresponds to a system parameterizing $G_2$ instantons on $P$ modulo the gauge group $\cG$, for a cocalibrated $G_2$ structures of type $W3$. For the proof, we can indeed assume that $\om$ is an arbitrary $G_2$ structure. The equation $\cE_P$ we consider on $\cA$ is:
\begin{equation}
 \label{eq: system only bundle}
F_\theta \wedge *\om=0
\end{equation}
Set as before $\bfL_P$ the linearisation of $\cE_P$ at $\theta\in \cA$, $\bfP_P$ the infinitesimal action
of $\cG$ on $\cA$ at $\theta$ and $$\cR_P=\Om^6(\ad P).$$
\begin{prop}
\label{prop: elliptic only bundle}
 Assume $(\theta)$ is a solution to (\ref{eq: system only bundle}). Then the sequence
 \begin{equation}
 \label{seq: P_P and L_P}
 \Lie(\cG) \stackrel{{\bf P}_P}{\longrightarrow} T_{\theta}\cA \stackrel{\bf L_P}{\longrightarrow} \cR_P
\end{equation}
is elliptic at the middle term.
\end{prop}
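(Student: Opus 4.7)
The plan is to reduce ellipticity to a pointwise linear algebra statement about $G_2$-representations on $T^*M$. First I would write down the principal symbols: since $\bfP_P(r) = d^\theta r$ and $\bfL_P(\dot\theta) = d^\theta \dot\theta \wedge *\om$ are first-order differential operators (both of order one with $\mathbf{s}=0$ for the middle term), for any $0 \neq v \in T^*M$ one has
\begin{equation*}
\sigma_{\bfP_P}(v)(r) = v \otimes r, \qquad \sigma_{\bfL_P}(v)(\dot\theta) = v \wedge \dot\theta \wedge *\om.
\end{equation*}
Ellipticity at the middle reduces to showing $\ker \sigma_{\bfL_P}(v) = \operatorname{Im} \sigma_{\bfP_P}(v)$. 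Since $\ad P$ enters only as a coefficient space, it suffices to prove the corresponding statement for the scalar symbols, i.e.\ for the linear maps on $T^*M$ and $T^*M \otimes T^*M$ obtained by replacing $\ad P$-valued forms by ordinary forms.

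The key observation is the $G_2$-representation theoretic identity
\begin{equation*}
\Om^2_{14} = \{\alpha \in \Om^2 : \alpha \wedge *\om = 0\},
\end{equation*}
which follows from the decomposition $\Om^2 = \Om^2_7 \oplus \Om^2_{14}$ together with a dimension count (the map $\alpha \mapsto \alpha \wedge *\om$ from $\Om^2$ to $\Om^6$ is surjective with seven-dimensional image). Thus, if I write the condition defining $\ker \sigma_{\bfL_P}(v)$, it becomes
\begin{equation*}
\ker \sigma_{\bfL_P}(v) = \{\dot\theta \in T^*M \otimes \ad P : v \wedge \dot\theta \in \Om^2_{14}(\ad P)\},
\end{equation*}
while $\operatorname{Im} \sigma_{\bfP_P}(v) = v \otimes \ad P$, which is contained in the kernel trivially (because $v \wedge v = 0$). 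The reverse inclusion is the content of the proof.

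Hence I must show: for any $0 \neq v \in T^*M$, the set of $w \in T^*M$ with $v \wedge w \in \Om^2_{14}$ equals $\langle v \rangle$. Equivalently, the linear map
\begin{equation*}
\Phi_v : T^*M \lra{} \Om^2_7, \qquad w \longmapsto \pi_7(v \wedge w),
\end{equation*}
has one-dimensional kernel. Both source and target are seven-dimensional, so it is enough to prove $\ker \Phi_v \subseteq \langle v \rangle$. I would carry this out by identifying $\Om^2_7 \cong T^*M$ via $X \mapsto \iota_X \om$ and using the standard formula $\pi_7(v \wedge w) = \tfrac{1}{3}\iota_{v^\sharp \times w^\sharp}\om$ expressing the projection through the $G_2$ cross product, together with the non-degeneracy property that $v^\sharp \times w^\sharp = 0$ implies $v$ and $w$ are linearly dependent. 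A direct check on the standard $G_2$-structure of $\RR^7$ (extending $v$ to an oriented orthonormal basis) suffices, since the statement is $G_2$-invariant and pointwise.

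The main obstacle, and indeed the only subtle point, is justifying the last step intrinsically: one must know that the $G_2$ cross product on $\RR^7$ is non-degenerate on planes, or equivalently that $\Phi_v$ is surjective onto $\Om^2_7$ for $v \neq 0$. Once this is in place, combining the above inclusion with $\operatorname{Im} \sigma_{\bfP_P}(v) = \langle v \rangle \otimes \ad P$ yields the required equality of kernel and image, completing the ellipticity verification for \eqref{seq: P_P and L_P}. In particular, no hypothesis on $\om$ being torsion-free is used; the argument only relies on the pointwise $G_2$-module structure determined by the positive $3$-form $\om$, consistently with the remark that $\om$ may be an arbitrary $G_2$-structure in this part of the proof.
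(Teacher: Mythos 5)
Your proposal is correct and follows essentially the same route as the paper: both reduce ellipticity at the middle term to the pointwise $G_2$-linear-algebra fact that $v\wedge w\in\Lambda^2_{14}$ forces $w$ proportional to $v$ (the paper phrases this as injectivity of the symbol of $\bfL_P+\bfP_P^*$ and checks $\pi_7(e^1\wedge e^2)\neq 0$ on the standard model via $G_2$-transitivity, which is exactly your cross-product non-degeneracy step). No substantive difference.
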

This is to say that the image of 
the symbol $\sigma({\bf P}_P)$, which equals $(\ker \sigma({\bf P}_P^*))^\perp$, is equal to the kernel of $\sigma({\bf L}_P)$. This is equivalent to the fact that the symbol of the differential operator $D_\theta={\bf L_P}+{\bf P}_P^*$ is injective. 

\begin{lemma}
\label{lem:ellipticbundle}
The symbol of $D_\theta={\bf L_P}+{\bf P}_P^*$ given by  
\begin{eqnarray*}
\sigma_{D_\theta}(\xi)a=(*\omega\wedge(\xi\wedge a),\iota_{\xi^\#}a)
\end{eqnarray*}
is injective.
\end{lemma}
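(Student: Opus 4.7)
The plan is to reduce the claim to a pointwise linear-algebra statement in $T^*_xM$, using the tensorial nature of the symbol together with the $G_2$-symmetry. First, I observe that $\sigma_{D_\theta}(\xi)$ acts on $T^*_xM\otimes\ad P_x$ trivially in the $\ad P_x$ factor, since neither $\xi\wedge a\wedge*\omega$ nor $\iota_{\xi^\#}a$ involves the connection. Choosing a basis of $\ad P_x$, injectivity thus reduces to the scalar statement: if $\xi\neq 0$ and $a\in T^*_xM$ satisfies $\xi\wedge a\wedge*\omega=0$ and $\iota_{\xi^\#}a=0$, then $a=0$.

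Next I would exploit that $G_2$ preserves $\omega$ (and hence $*\omega$) and acts transitively on the unit sphere of $(T^*_xM,g)$, since $G_2/SU(3)\cong S^6$. After rescaling so that $|\xi|=1$, I may choose a $G_2$-adapted orthonormal coframe $(e^1,\ldots,e^7)$ with $\xi=e^1$ in which $\omega$ takes its Bryant normal form, giving
\begin{equation*}
*\omega=e^{4567}+e^{2367}+e^{2345}+e^{1357}-e^{1346}-e^{1256}-e^{1247}.
\end{equation*}
The condition $\iota_{\xi^\#}a=0$ reads $a=a_2e^2+\cdots+a_7e^7$, so $\xi\wedge a=\sum_{i=2}^{7}a_i\,e^{1i}$.

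Finally I would compute $\xi\wedge a\wedge*\omega$ directly. The four summands of $*\omega$ containing $e^1$ annihilate every $e^{1i}$, so only the three $e^1$-free terms $e^{4567}, e^{2367}, e^{2345}$ contribute. A short check shows that for each $i\in\{2,\ldots,7\}$ exactly one of these three $4$-forms has index set disjoint from $\{1,i\}$: $i=2,3$ pair with $e^{4567}$, $i=4,5$ with $e^{2367}$, and $i=6,7$ with $e^{2345}$. Each pairing yields a distinct basis $6$-form, so the six resulting terms are linearly independent, and the vanishing of their sum forces $a_2=\cdots=a_7=0$.

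I expect the only modest obstacle is the explicit sign and ordering bookkeeping in the last step; conceptually, everything is driven by the $G_2$-transitivity on the sphere and the normal form of $\omega$, which reduce symbol injectivity to a finite-dimensional combinatorial check. An intrinsic alternative, which I would mention as a reformulation, is to note that the map $\beta\mapsto\beta\wedge*\omega$ from $\Omega^2$ to $\Omega^6$ has kernel exactly $\Omega^2_{14}$ and is an isomorphism from $\Omega^2_7$ onto $\Omega^6$; since $\beta=\xi\wedge a$ with $a\perp\xi^\#$ lies in $\Omega^2_7$ (as the above calculation makes clear), the injectivity is immediate.
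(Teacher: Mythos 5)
Your main argument is correct and follows essentially the same route as the paper's proof: drop the $\ad P$ factor, use $G_2$-transitivity to normalize $\xi=e^1$, and check the wedge with $*\omega$ pointwise in the Bryant normal form (the paper normalizes one step further, to $a=\lambda e^2$, and concludes by computing $\pi_7(e^1\wedge e^2)=\tfrac{\lambda}{3}(e^{12}-e^{47}+e^{56})\neq 0$; your six-term bookkeeping is a direct substitute for this). One caveat: your closing ``intrinsic alternative'' is not correct as stated, because $\xi\wedge a$ with $a\perp\xi^{\#}$ does \emph{not} lie in $\Lambda^2_7$ in general --- e.g.\ $e^{12}$ has a nonzero $\Lambda^2_{14}$ component, as the paper's formula for $\pi_7(e^{12})$ shows; the correct intrinsic statement is only that $\beta\wedge *\omega=0$ iff $\beta\in\Lambda^2_{14}$, so one still has to verify that $\pi_7(\xi\wedge a)\neq 0$, which is exactly the content of the coordinate computation. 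Since that remark is an aside, it does not affect the validity of your proof.
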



\begin{proof}
$D_\theta$ is given by
\begin{eqnarray*}
D_\theta a=(*\omega\wedge d^\theta a ,(d^\theta)^*a)
\end{eqnarray*}
so the expression for the symbol is clear. 
To see that the map is injective we neglect the coefficient bundle $\ad P$ and observe that if $\iota_{\xi^\#}a=0$ then $\xi,a\in\Lambda^1$ are orthogonal. $G_2$ acts transitively on the set of ordered pairs of orthogonal vectors in $\mathbb{R}^7$ so we suppose that $\xi=e^1$ and $a=\lambda e^2$ while $\omega=e^{123}+e^1\wedge(e^{45}-e^{67})+e^2\wedge(e^{46}-e^{75})+e^3\wedge(e^{47}-e^{56})$. Then $*\omega\wedge\xi\wedge a=0$ if and only if $\xi\wedge a\in\Lambda^2_{14}$. The $\Lambda^2_7$ component of $e^1\wedge \lambda e^2$ is given by $\pi_7(e^1\wedge\lambda e^2)=\lambda/3(e^1\wedge e^2-e^4\wedge e^7+e^5\wedge e^6)$ which is non-zero whenever $\lambda\neq 0$.
\end{proof}

\begin{remark}
The usual method for studying the moduli problem for $G_2$-instantons is to consider the $G_2$-monopole equation $F_\theta\wedge *\omega+*d^\theta\Phi=0$ for $\theta$ a connection and $\Phi\in\Omega^0(\ad P)$ a Higgs-type field. 
The monopole equation arises as the dimensional reduction of the $Spin_7$-instanton equation and so defines an elliptic system. 
As observed by Walpuski \cite{Wa}, the Bianchi identity for $\theta$ implies that on a closed manifold $(M,\omega)$ with $G_2$-structure satisfying $d*\omega=0$ a $G_2$-monopole must satisfy $F_\theta\wedge *\omega=0$. 

Under the condition that $d*\omega=-4d\phi\wedge *\omega$, we can define a new positive $3$-form $\tilde{\omega}=e^{3\phi}\omega$. Then $\omega$-instantons coincide exactly with $\tilde{\omega}$-instantons. The form $\tilde{\omega}$ also satisfies $d\tilde{*}\tilde{\omega}=0$ so we can conclude that $\omega$-instantons coincide with $\tilde{*}\tilde{\omega}$-monopoles. The latter, modulo gauge, is a well-determined elliptic condition. We avoid this framework however, to avoid introducing the extraneous Higgs term.
\end{remark}


\section{Killing Spinors and generalized geometry in dimension 7}
\label{sec:genGeom}
Killing spinors in generalized  geometry were
introduced in \cite{grt} and studied on $6$ dimensional manifolds. In this section we show that on a $7$ dimensional manifold the Killing spinor equations on a suitable transitive Courant algebroid are equivalent to the system (\ref{eq:G2 system}).
Furthermore, following \cite{grt} we introduce the space of infinitesimal variations of a solution of the Killing spinor equations
modulo inner symmetries of the Courant algebroid. We relate this space to $H^1(KS_7)$.


\subsection{Courant algebroids, generalized metrics and Killing spinors}
 \label{sec:Courant alg and metrics}
 \begin{defi}
 \label{def:Courant}
  A Courant algebroid $(E,\la\cdot,\cdot\ra,[\cdot,\cdot],\pi)$ over a
  manifold $M$ consists of a vector bundle $E\to M$ together with a
  non-degenerate symmetric bilinear form $\la\cdot,\cdot\ra$ on $E$, a
  (Dorfman) bracket $[\cdot,\cdot]$ on the sections $\Omega^0(E)$, and
  a bundle map $\pi:E\to TM$ such that the following properties are
  satisfied, for $e,e',e''\in \Omega^0(E)$ and $\phi\in \cC^\infty(M)$:
  \begin{itemize}
  \item[(D1):] $[e,[e',e'']] = [[e,e'],e''] + [e',[e,e'']]$,
  \item[(D2):] $\pi([e,e'])=[\pi(e),\pi(e')]$,
  \item[(D3):] $[e,\phi e'] = \pi(e)(\phi) e' + \phi[e,e']$,
  \item[(D4):] $\pi(e)\la e', e'' \ra = \la [e,e'], e'' \ra + \la e',
    [e,e''] \ra$,
  \item[(D5):] $[e,e']+[e',e]=2\pi^* d\la e,e'\ra$.
  \end{itemize}
\end{defi}
We are interested in a particular class of Courant algebroids, that can be constructed as follows. Let $M$ be a smooth manifold of dimension $n$. Let $G$ be a Lie group and $P$ be a principal $G$-bundle over $M$. As in Section \ref{sec:moduli}, we fix a non-degenerate biinvariant pairing $c$ on the Lie algebra $\mathfrak{g}$ of $G$.
Consider the vector bundle
\begin{equation}\label{eq:Edef}
  E = T \oplus \ad P \oplus T^*
\end{equation}
endowed with the symmetric pairing
$$
\langle X + r + \xi,Y + t + \eta\rangle = \frac{1}{2}(\eta(X) +
\xi(Y)) + c(r,t),
$$
and the canonical projection
$$
\pi \colon E \to T.
$$
Given $3$-form $H_0$ on $M$ and a connection $\theta_0$ on $P$ with curvature $F_0$, we can endow $\Omega^0(E)$ with a bracket
\begin{equation}\label{eq:bracket}
  \begin{split}
    [X+r+\xi,Y+t+\eta]  = {} & [X,Y] + L_{X}\eta - i_{Y}d\xi + i_{Y}i_{X}H_0\\
    & - [r,t] - F_0(X,Y) + d^{\theta_0}_Xt - d^{\theta_0}_Y r\\
    & + 2c(d^{\theta_0} r,t) + 2c(F_0(X,\cdot),t) - 2c(F_0(Y,\cdot),r).
  \end{split}
\end{equation} 
Following \cite{ChStXu}, it can be checked that the tuple $(E,\la\cdot,\cdot\ra,[\cdot,\cdot],\pi)$ satisfies the axioms of Definition \ref{def:Courant} if and only if the following Bianchi identity is satisfied
\begin{equation}\label{eq:bianchi}
  d H_0 = c (F_0 \wedge F_0).
\end{equation}

Let $(t,s)$ be the signature of the pairing on the Courant algebroid
$E$. A generalized metric of signature $(p,q)$ is given by a subbundle
$$
V_+ \subset E
$$
such that the restriction of the metric on $E$ to $V_+$ is a
non-degenerate metric of signature $(p,q)$. We denote by $V_-$ the
orthogonal complement of $V_+$ on $E$.
\begin{defi}[\cite{GF}]\label{def:admet}
  A metric $V_+$ of arbitrary signature is admissible if
$$
V_+ \cap T^* = \{0\} \qquad \textrm{and} \qquad \rk V_+ = \rk E - \dim
M.
$$
\end{defi}
A generalized connection $D$ (or simply, a connection) on
$E$ is a first order differential operator
$$
D \colon \Omega^0(E) \to \Omega^0(E^* \otimes E)
$$
satisfying the Leibniz rule $D_e(\phi e') = \phi D_ee' +
\pi(e)(\phi)e'$, for $e,e' \in \Omega^0(E)$ and $\phi\in
C^\infty(M)$ and compatible with the
inner product on $E$, that is, satisfying
$$
\pi(e)(\langle e',e'' \rangle) = \langle D_e e',e'' \rangle + \langle
e',D_e e'' \rangle.
$$
Given an admissible metric and a smooth function $\phi \in C^\infty(M)$, one can associate a torsion-free, compatible connection $D^\phi$ \cite{GF,grt}, constructed from the Gualtieri-Bismut connection \cite{G3}. 
The connection $D^\phi$ induces differential operators
$$
D^\phi_{\pm}: V_-\to V_-\ot (V_\pm)^*.
$$ 
From $D^\phi_+$ and $D^\phi_-$ we get differential
operators on spinors
$$
D^\phi_\pm:S_+(V_-)\to S_+(V_-)\ot (V_\pm)^*
$$
and the associated Dirac operator
$$
\slashed D^\phi_-:S_+(V_-)\to S_-(V_-).
$$
\begin{defi}
  Given a generalized metric $V_+$ and $\phi \in
  C^{\infty}(M)$, the \emph{Killing spinor equations} for a spinor
  $\eta \in S_+(V_-)$ are
  \begin{equation}\label{eq:Killing}
    \begin{split}
      D^\phi_+ \eta &= 0,\\
      \slashed D_-^\phi \eta & = 0.
    \end{split}
  \end{equation}
\end{defi}

Specifying now the previous construction to dimension $n = 7$, we obtain a characterization of the Strominger system \eqref{eq:G2 system}.

\begin{theorem}\label{thm:Strom}
Let $M$ be $7$-dimensional oriented spin manifold, endowed with the Courant algebroid  $(E,\la\cdot,\cdot\ra,[\cdot,\cdot],\pi)$ determined by a pair $(H_0,\theta_0)$ satisfying \eqref{eq:bianchi}.  A solution $(V_+,\phi,\eta)$ of the killing spinor equations \eqref{eq:Killing}
is equivalent to a tuple $(\omega,\phi,\theta)$ satisfying the Strominger system \eqref{eq:G2 system}, where $\omega$ is a $G_2$-structure on $M$, with torsion
\begin{equation}\label{eq:thmStrom2}
      - *( d\om - \theta_\omega \wedge \om) = H_0 + db + 2c (a,F_0) + c(a, d^{\theta_0}a) + \frac{1}{3} c(a,[a , a]),
\end{equation}
for  $a := \theta - \theta_0 \in \Omega^1(\ad P)$ and a suitable $2$-form $b$ on $M$.
\end{theorem}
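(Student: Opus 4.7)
The plan is to unpack a solution $(V_+, \phi, \eta)$ of \eqref{eq:Killing} and match it, term by term, with a tuple $(\omega, \phi, \theta)$ solving the Strominger system \eqref{eq:G2 system}, under the torsion identification \eqref{eq:thmStrom2}. First, I would parametrize the admissible generalized metric: since $V_+$ is transverse to $T^*$ and has rank $\rk E - 7 = 7 + \dim \mathfrak{g}$, it is the graph over $T \oplus \ad P$ of a map to $T^*$, and hence determined by a triple $(g, b, a)$ consisting of a Riemannian metric $g$ on $M$, a $2$-form $b$, and an element $a \in \Omega^1(\ad P)$ which I identify with $\theta - \theta_0$. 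The projection $\pi$ gives an isometric identification $V_- \cong (T,g)$, so $S_+(V_-)$ becomes the usual rank $8$ spinor bundle of $(M,g)$; a unit section $\eta$ has stabilizer $G_2$ and therefore determines a compatible $G_2$-structure $\omega$ on $M$.

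Next, I would translate \eqref{eq:Killing} into equations in the tuple $(g, b, a, \phi, \eta)$ using the explicit Bismut-type presentation of $D^\phi$ on a transitive Courant algebroid developed in \cite{GF, grt}. Decomposing $D^\phi_+$ according to $V_+ = T \oplus \ad P$, the equation $D^\phi_+ \eta = 0$ splits into two pieces. The $T$-component says that $\eta$ is parallel for a $g$-metric connection with totally skew torsion $H$; by Friedrich--Ivanov \cite{FrIv03}, this is equivalent to $\omega$ being cocalibrated of type $W_3$ together with $H = -*(d\omega - \theta_\omega \wedge \omega)$. The $\ad P$-component reads $F_\theta \cdot \eta = 0$ (Clifford action on the $G_2$ spinor), which in dimension $7$ is equivalent to the $G_2$-instanton condition $F_\theta \wedge *\omega = 0$. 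The Dirac equation $\slashed D^\phi_- \eta = 0$, combined with the parallel condition, reduces to $(d\phi - \tfrac{1}{4} H) \cdot \eta = 0$, forcing the Lee form relation $\theta_\omega = -4 d\phi$. This recovers the first and third lines of \eqref{eq:G2 system}.

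Finally, I would identify the spinorial torsion $H$ with the Chern--Simons expression \eqref{eq:thmStrom2} and deduce the remaining Bianchi identity. The torsion $3$-form of the Bismut connection for the metric parametrized by $(g, b, a)$ on the Courant algebroid with reference data $(H_0, \theta_0)$ is
\[ H = H_0 + db + 2 c(a, F_0) + c(a, d^{\theta_0} a) + \tfrac{1}{3} c(a, [a, a]), \]
which is precisely \eqref{eq:thmStrom2}. Combining the Courant Bianchi identity $dH_0 = c(F_0 \wedge F_0)$ from \eqref{eq:bianchi} with the standard Chern--Simons differential identity
\[ d\!\left(c(a, d^{\theta_0} a) + \tfrac{1}{3} c(a, [a, a])\right) + 2\, d\, c(a, F_0) = c(F_\theta \wedge F_\theta) - c(F_0 \wedge F_0), \]
yields $dH = c(F_\theta \wedge F_\theta)$, i.e.\ the Bianchi identity of \eqref{eq:G2 system}. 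The converse direction is obtained by reversing each identification: given $(\omega,\phi,\theta)$ solving \eqref{eq:G2 system} one chooses any $b \in \Omega^2(M)$ consistent with \eqref{eq:thmStrom2} (the freedom in $b$ reflecting the $B$-field gauge) and assembles $V_+$ from $(g, b, a)$.

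The main technical obstacle I anticipate lies in the $\ad P$-component of $D^\phi_+ \eta = 0$: passing from the spinorial statement $F_\theta \cdot \eta = 0$ to the $G_2$-instanton condition $F_\theta \wedge *\omega = 0$ requires the $G_2$-specific fact that Clifford multiplication by a $2$-form on the unit spinor vanishes exactly on $\Omega^2_{14} \subset \Omega^2$. This is the analogue of the $SU(3)$ analysis of \cite{grt} but needs a separate Clifford-algebra computation, and correctly bookkeeping the twist by $(g,b,a)$ throughout the Bismut formulas is the main point where sign and normalisation conventions must be fixed carefully.
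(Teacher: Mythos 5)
Your proposal is correct and follows essentially the same route as the paper: parametrize the admissible metric $V_+$ by $(g,b,a)$ via the isotropic splitting, reduce the generalized Killing spinor equations to the physical gravitino/dilatino/instanton/Bianchi system as in \cite{GF,grt}, and invoke Friedrich--Ivanov \cite{FrIv03} to translate into $G_2$-structure language, with the Chern--Simons transgression identity yielding \eqref{eq:thmStrom2} and the Bianchi identity. The paper's proof is a terser sketch that delegates to \cite[Lemma 5.1]{grt} and \cite[Theorem 1.2]{FrIv03} exactly the steps you spell out (the splitting of $D^\phi_+$ over $T\oplus\ad P$, the fact that Clifford multiplication by a $2$-form annihilates the $G_2$ spinor precisely on $\Lambda^2_{14}$, and the torsion formula).
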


\begin{proof}
The result follows combining the proof of \cite[Lemma 5.1]{grt} (which can be easily generalized to arbitrary dimension of $M$) with the proof of \cite[Theorem 1.2]{FrIv03}. For the convenience of the reader, we give a sketch of the argument. Following \cite{grt}, an admissible metric is equivalent to a metric $g$ on $M$, together with an isotropic splitting of the anchor map $\pi :E \rightarrow T$, which determines a $2$-form $b \in \Omega^2$ and a $1$-form $a \in \Omega^1(\ad P)$ with values in $\ad P$. Define a $3$-form $H$ and a connection $\theta$ on $P$ by the formulae
\begin{align*}
H & = H_0 + db + 2c (a,F_0) + c(a, d^{\theta_0}a) + \frac{1}{3} c(a,[a , a]),\\
\theta & = \theta_0 + a,
\end{align*}
such that the Bianchi identity $dH=c(F_\theta\wedge F_\theta)$ is satisfied. Then, a solution to the Killing spinor equations in generalized geometry \eqref{eq:Killing} gives a tuple $(g,\theta,H,\eta,\phi)$, where 
$\eta$ is a spinor with respect to $g$ and $\phi$ is the dilaton function. Arguing as in the proof of \cite[Lemma 5.1]{grt}, it follows that tuple satisfies the Killing spinors equations \eqref{eq:Killing spinors 10dim}, jointly with the instanton condition \eqref{eq:instanton} and the Bianchi identity \eqref{eq:bianchi}. Finally, following the proof of \cite[Theorem 1.2]{FrIv03}, $(g,\eta)$ determine a $G_2$ structure $\om$ with torsion $H$, and $(\om, \phi, \theta)$ is a solution to \eqref{eq:G2 system}. Conversely, given a solution $(\om, \phi, \theta)$ of \eqref{eq:G2 system} satisfying \eqref{eq:thmStrom2} for a $2$-form $b$ on $M$, we can associate a generalized metric $V_+$ on the fixed Courant algebroid $E$, determined by $g = g_\omega$, $a = \theta - \theta_0$ and $b$. Then, considering the spinor $\eta$ determined by $\omega$, it follows that $(V_+,\phi,\eta)$ provides a solution of \eqref{eq:Killing}.
\end{proof}

Note that the condition \eqref{eq:thmStrom2} in Theorem \ref{thm:Strom} can be expressed more invariantly by the following equivalent condition in the equivariant cohomology of $P$
$$
[p^*H_0 - CS(\theta_0)] = [p^*H - CS(\theta)] \in H^3(P,\RR)^G,
$$
where $p \colon P \to M$ is the canonical projection and $CS(\theta) \in \Omega^3(\ad P)^G$ is the ($G$-invariant) Chern-Simons $3$-form of the connection $\theta$. The class $[p^*H_0 - CS(\theta_0)] \in H^3(P,\RR)^G$ can be regarded as the isomorphism class of a $G$-equivariant (exact) Courant algebroid on the total space of $P$, from which the (transitive) Courant algebroid $E$ is obtained by reduction \cite{GF}.


\subsection{Infinitesimal moduli for the Killing spinor equations in dimension $7$}
\label{sec: inf considerations}
We now describe the space of infinitesimal solutions to the Killing spinors equations (\ref{eq:Killing})
on a fixed Courant algebroid modulo inner symmetries of the algebroid.

Let $(E,\la\cdot,\cdot\ra,[\cdot,\cdot],\pi)$ be the Courant algebroid determined by a solution $x=(\om,\phi,\theta)$ of $(\ref{eq:G2 system})$, as in Section \ref{sec:Courant alg and metrics} (setting $H_0$ to be the torsion of $\omega$ and $\theta_0 = \theta$).
The group of symmetries $\Aut(E)$ of this transitive Courant algebroid has been described in \cite[Proposition 4.3]{grt}, following \cite{Rubioth}.
In particular, the space $\Om^0(E)$, sections of the bundle $E$, naturally embeds into the Lie algebra
of inner symmetries of the Courant algebroid via the map
$$
\begin{array}{ccc}
 \Om^0(E) & \rightarrow & \Lie(\Aut(E))\\
  e & \mapsto & [\;e\;,\;\cdot\;] .
\end{array}
$$
More explicitly, this is described by
\begin{equation}
 \begin{array}{ccc}
 \Om^0(E)=\Om^0(T)\times \Om^0(\ad P)\times \Om^1 & \rightarrow & \Lie(\Aut(E))\\
  (V,r,\xi) &  \rightarrow & (V,r,-d\xi -\iota_V H + 2c(r,F)).
 \end{array}
\end{equation}
Note also
$$
\Om^0(E)=\Lie(\ctG)\times \Om^1.
$$
An admissible generalized metric is equivalent to a metric $g$ on $M$ together with an isotropic splitting of $E\rightarrow T$.
Such isotropic splitting are locally modelled on $\Om^1(\ad P)\times \Om^2$.
Thus the space of infinitesimal variations of generalized metrics is modeled on $S^2T^*\times\Om^1(\ad P)\times \Om^2$.
We add the dilaton $\phi$ as a parameter defining the connection $D^\phi$ of a generalized metric. We also consider
$G_2$ metrics $g$ defined by a $3$-form $\om$. Then the tangent space to the space of parameters 
for the Killing spinor equations \eqref{eq:Killing} in dimension 7 is:
$$
T_x\hat{\cP}:=\Om^3\times \cC^\infty(M)\times \Om^1(\ad P)\times \Om^2.
$$
Note that
$$
T_x\hat{\cP}=T_x\cP \times \Om^2.
$$
We can consider the infinitesimal action $\hbfP$ of $\Om^0(E)$ on the space of parameters:
\begin{equation}
 \label{eq:gen inf action}
 \begin{array}{cccc}
  \hbfP : & \Om^0(E) & \rightarrow & T_x\hat{\cP} \\
   & (V,r,\xi) & \mapsto & (\bfP(V,r), d\xi + i_V(H) - 2c(r,F)),
 \end{array}
\end{equation}
where we recall the operator $\bfP$ is defined in Section \ref{sec: lin and symbols}.
Note that from the naturality of the equations \eqref{eq:Killing}, solutions
to this equations form orbits under the action of $\Aut(E)$.

We set now $\hat{\bfL}$ to be the linearisation of the Killing spinors equations \eqref{eq:Killing} in dimension $7$ on the
fixed Courant algebroid $(E,\la\cdot,\cdot\ra,[\cdot,\cdot],\pi)$. Fixing the Courant algebroid structure
amounts to allow variations of the $3$-form $H$ by exact terms only, and thus replace the Bianchi identity equation
by a primitive equation:
\begin{equation}
 \label{eq: Gen Kil linearisation}
 \begin{array}{cccc}
 \hat{\bfL} : &   \Om^3\times \cC^\infty(M) \times \Om^1(\ad P)\times \Om^2 & \rightarrow &\Om^7 \times \Om^5 \times \Om^3 \times \Om^6(\ad P)  \\
        &  (\dot \om,\dot \phi, \dot \theta, b) & \mapsto & 
        \begin{cases}
        \hbfL_1= d\dot \om \wedge \om + d\om\wedge \dot \om \\
        \hbfL_2= d * J \dot  \om+4 d\dot \phi \wedge *\om +4 d\phi \wedge *J \dot \om \\
        \hbfL_3= \dot T -2(c(\dot\theta, F_\theta))-db\\
        \hbfL_4= d^\theta\dot \theta \wedge *\om + F_\theta \wedge *J \dot \om
\end{cases}
 \end{array}
\end{equation}
where $\dot T$ stands for the infinitesimal variation of the torsion term $-*( d\om + 4 d\phi \wedge \om )$
with respect to the variation $(\dot\om,\dot\phi)$. 
Then define a sequence of differential operators
\begin{equation}
  \label{eq:Gen complex}
\Om^0(E) \lra{\hbfP} T_x\hat{\cP} \lra{\hbfL} \Om^7 \times \Om^5 \times \Om^3 \times \Om^6(\ad P)
\end{equation}
A straightforward calculation shows that (\ref{eq:Gen complex}) is a complex of differential operators.
Then, relying on Theorem \ref{theo: finite dim}, one easily shows:
\begin{theorem}
 \label{theo: gen finite dim}
 The complex \eqref{eq:Gen complex} is elliptic, and thus the space
 $$
 H^1(\widehat{KS}_7):=\frac{\ker \hbfL}{\Im \hbfP}
 $$
 is finite dimensional.
\end{theorem}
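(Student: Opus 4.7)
The plan is to reduce the ellipticity of \eqref{eq:Gen complex} to Theorem \ref{theo: finite dim} via a direct comparison of principal symbols: $\hbfL$ differs from $\bfL$ only in its third component (lowered by one order) and by the presence of the new source variable $b$, which makes the reduction essentially algebraic.

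First, I would verify that \eqref{eq:Gen complex} is a complex, as asserted before the theorem. For $i = 1, 2, 4$, the identities $\hbfL_i\circ\hbfP = 0$ are inherited from \eqref{seq: P and L} since these components coincide with those of $\bfL$ and are independent of $b$. For $i = 3$, the identity $\hbfL_3\circ\hbfP(V, r, \xi) = 0$ reduces to
\begin{equation*}
L_V T - 2c(d^\theta r + \iota_V F_\theta, F_\theta) = d\bigl(d\xi + \iota_V H - 2c(r, F_\theta)\bigr),
\end{equation*}
which follows from Cartan's formula, the Leibniz identity $d\,c(r, F_\theta) = c(d^\theta r, F_\theta)$, and the Bianchi identity $dT = c(F_\theta\wedge F_\theta)$ (valid because $x$ solves \eqref{eq:G2 system}).

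For ellipticity at a nonzero covector $v$, extending the Douglis--Nirenberg weights of Lemmas \ref{lem:symbols P} and \ref{lem:symbols L} by assigning $b$ and $\xi$ suitable weights, the symbols of $\hbfL_1, \hbfL_2, \hbfL_4$ coincide with those of $\bfL_1, \bfL_2, \bfL_4$, while a direct computation yields
\begin{equation*}
\sigma(\hbfL_3)(v)(\dot\om, \dot\phi, \dot\theta, b) = -{*}\bigl(v\wedge(\dot\om + 4\dot\phi\,\om)\bigr) - 2\,c(\dot\theta, F_\theta) - v\wedge b.
\end{equation*}
Wedging with $v$ kills the $b$-term (since $v\wedge v = 0$) and gives $v\wedge\sigma(\hbfL_3)(v) = \sigma(\bfL_3)(v)|_{(\dot\om, \dot\phi, \dot\theta)}$; similarly, $\sigma(\hbfP)(v)(V, r, \xi) = \bigl(\sigma(\bfP)(v)(V, r),\,v\wedge\xi - 2c(r, F_\theta)\bigr)$. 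Given $(\dot\om, \dot\phi, \dot\theta, b) \in \ker\sigma(\hbfL)(v)$, the first identity implies $(\dot\om, \dot\phi, \dot\theta) \in \ker\sigma(\bfL)(v)$, so Theorem \ref{theo: finite dim} provides $(V, r)$ with $\sigma(\bfP)(v)(V, r) = (\dot\om, \dot\phi, \dot\theta)$. In particular $\dot\om = v\wedge\iota_V\om$, $\dot\phi = 0$, and $\dot\theta = v\otimes r$, so $v\wedge(\dot\om + 4\dot\phi\,\om) = 0$ and $c(\dot\theta, F_\theta) = v\wedge c(r, F_\theta)$. Substituting into $\sigma(\hbfL_3)(v) = 0$ gives $v\wedge(b + 2c(r, F_\theta)) = 0$, and the standard exactness of the de Rham symbol complex produces $\xi \in \Lambda^1_p$ with $b = v\wedge\xi - 2c(r, F_\theta)$. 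Then $\sigma(\hbfP)(v)(V, r, \xi) = (\dot\om, \dot\phi, \dot\theta, b)$, proving ellipticity in the middle; finite-dimensionality of $H^1(\widehat{KS}_7)$ then follows from the general theory of Douglis--Nirenberg elliptic complexes on a compact manifold.

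The main delicate point will be choosing Douglis--Nirenberg weights so that the zeroth-order term $-2c(r, F_\theta)$ in $\sigma(\hbfP_4)$ precisely matches the zeroth-order term $-2c(\dot\theta, F_\theta)$ in $\sigma(\hbfL_3)$, which is exactly what allows the argument to close after invoking old ellipticity. Beyond this bookkeeping, I do not expect any genuinely new analytic obstacle beyond those already handled in Theorem \ref{theo: finite dim}.
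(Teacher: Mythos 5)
Your reduction is correct and is precisely the route the paper intends: the paper gives no details beyond ``relying on Theorem \ref{theo: finite dim}, one easily shows'', and your symbol-level comparison --- checking the complex property via Cartan/Leibniz/Bianchi, then using Koszul exactness of $v\wedge\cdot$ to absorb the new variable $b$ into the image of $\xi\mapsto v\wedge\xi - 2c(r,F_\theta)$ --- fills in exactly that gap. The one step to make explicit is the passage from $\ker\sigma(\hbfL)(v)$ to $\ker\sigma(\bfL)(v)$: with weights making $-2c(\dot\theta,F_\theta)$ principal, $v\wedge\sigma(\hbfL_3)(v)$ carries the cross term $-2v\wedge c(\dot\theta,F_\theta)$ absent from the decoupled symbol used in Section \ref{sec: proof main theo}, so you should first invoke the fourth (instanton) component to get $\dot\theta = v\otimes r$, which kills that cross term and reduces the remaining equations to the systems handled in Propositions \ref{prop: elliptic no bundle} and \ref{prop: elliptic only bundle}.
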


To relate $ H^1(\widehat{KS}_7)$ to the space $H^1(KS_7)$ from Theorem \ref{theo: finite dim}, we follow \cite{grt} and introduce
the flux map:
\begin{equation}
 \begin{array}{cccc}
  \delta: & H^1(KS_7) & \rightarrow & H^3(M) \\
          & [(\dot \om, \dot \phi, \dot \om)] & \mapsto & [ \dot T - 2c(\dot \theta, F)],
 \end{array}
\end{equation}
motivated by the flux quantization condition in Heterotic string theory. We note that
this map carries information on the infinitesimal variation of the Courant algebroid structure.
The kernel $\ker \delta$ of the flux map parametrizes infinitesimal variations of solutions
to (\ref{eq:Killing}) modulo (a subgroup of) symmetries $\Aut(E)$.
Then, following \cite{grt}, we obtain:

\begin{prop}
 \label{prop: link gen spinors, classical spinors}
 The group $H^1(\widehat{KS}_7)$ is given by an extension:
 \begin{equation}
  \label{eq: sequence gen H1}
  0 \rightarrow H^2(M) \rightarrow H^1(\widehat{KS}_7) \rightarrow \ker \delta \rightarrow 0
 \end{equation}
\end{prop}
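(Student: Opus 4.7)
The plan is to derive the extension as the relevant fragment of the long exact sequence in cohomology associated to a short exact sequence of three-term complexes, in such a way that $H^1(\widehat{KS}_7) \to \ker \delta$ is the projection forgetting the $2$-form $b$, and $H^2(M) \to H^1(\widehat{KS}_7)$ is the inclusion $[B] \mapsto [(0,0,0,B)]$.

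First I would organize the linearizations into a morphism of three-term complexes from
$$
\Om^0(E) \xrightarrow{\hbfP} T_x\widehat{\mathcal{P}} \xrightarrow{\hbfL} \mathcal{R}_{\mathrm{gen}}
$$
to
$$
\Lie(\ctG) \xrightarrow{\bfP} T_x\mathcal{P} \xrightarrow{\bfL} \mathcal{R}_{\mathrm{class}},
$$
given by the obvious projections killing the $\Om^1$ and $\Om^2$ summands on inputs and parameters, and by the identity in positions $1, 2, 4$ together with $d : \Om^3 \to \Om^4$ in position $3$ on targets. Commutativity is a direct verification; the only nontrivial square sits in position $3$, where $\bfL_3$ equals $\pm d(\dot T - 2c(\dot\theta, F)) = \pm d(\hbfL_3 + db) = \pm d \hbfL_3$. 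Moreover the linearized Bianchi identity $d \dot T = 2 d c(\dot\theta, F)$ shows $\bfL_3$ already takes values in $d\Om^3 \subset \Om^4$, so after restricting the position-$3$ target of $\bfL$ to $d\Om^3$ the morphism becomes termwise surjective. Its kernel is the truncated de Rham-type complex
$$
K : \Om^1 \xrightarrow{d} \Om^2 \xrightarrow{-d} Z^3,
$$
where $Z^3$ denotes closed $3$-forms. Its cohomology in degrees $0, 1, 2$ is respectively $Z^1$, $H^2(M,\RR)$, and $H^3(M,\RR)$.

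Next I would write the relevant fragment of the associated long exact sequence,
$$
\ker \bfP \xrightarrow{\delta_0} H^2(M,\RR) \xrightarrow{\iota} H^1(\widehat{KS}_7) \xrightarrow{\Phi} H^1(KS_7) \xrightarrow{\delta_1} H^3(M,\RR),
$$
and identify $\delta_1$ with the flux $\delta$ via an explicit Bockstein-type computation: given $(\dot\om, \dot\phi, \dot\theta) \in \ker \bfL$, lift to $(\dot\om, \dot\phi, \dot\theta, 0) \in T_x\widehat{\mathcal{P}}$ and apply $\hbfL$; the result lies in $K_2 = Z^3$ and equals exactly $\dot T - 2c(\dot\theta, F)$, which is the defining representative of $\delta$. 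Exactness of the sequence then yields the surjection $\Phi : H^1(\widehat{KS}_7) \to \ker \delta$ and identifies $\ker \Phi$ with the image of $\iota$, so the extension reduces to the injectivity of $\iota$.

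The last step, and expected main obstacle, is to show that $\delta_0 = 0$. Unwinding the connecting homomorphism, $\delta_0$ sends a stabilizer element $(V, r) \in \ker \bfP$ to the class $[\iota_V H - 2 c(r, F)]$. Closedness of this $2$-form follows from $L_V \om = 0$, $L_V \phi = 0$ and $d^\theta r + \iota_V F = 0$ via Cartan's formula together with the Bianchi identities $dH = c(F \wedge F)$ and $d^\theta F = 0$. Exactness is more delicate; I would try to produce an explicit primitive by exploiting the fact that $V$ is Killing for the $G_2$-metric $g_\om$ and $r$ satisfies the moment-type equation $d^\theta r = -\iota_V F$, combined with Hodge theory on $(M, g_\om)$. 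Alternatively, following the philosophy of \cite{grt} in six dimensions, one may reinterpret $\ker \delta$ as the quotient of infinitesimal deformations by the full action of $\Lie \Aut(E)$ (which extends the inner $\Om^0(E)$-action by outer closed $B$-field transforms), in which case the extension becomes formally exact by construction.
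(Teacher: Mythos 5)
Your homological skeleton is correct and is essentially the argument the paper has in mind (the paper gives no proof of this proposition, deferring to \cite{grt}): the termwise-surjective morphism of three-term complexes obtained by forgetting $\Om^1$ and $\Om^2$ and restricting the third target of $\bfL$ to $d\Om^3$, the kernel complex $\Om^1 \xrightarrow{d} \Om^2 \xrightarrow{-d} Z^3$ (with $Z^3$ the closed $3$-forms) whose middle and top cohomologies are $H^2(M,\RR)$ and $H^3(M,\RR)$, and the identification of the connecting map $\delta_1$ with the flux map $\delta$ are all fine. This correctly reduces the proposition to the vanishing of the other connecting map $\delta_0 \colon \ker\bfP \to H^2(M,\RR)$, $(V,r)\mapsto [\iota_V H - 2c(r,F_\theta)]$.

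That last step is a genuine gap, and it is not one that Hodge theory on $(M,g_\om)$ will close unconditionally. The class $[\iota_V H - 2c(r,F_\theta)]$ is precisely the obstruction to lifting the infinitesimal automorphism $(V,r)\in\ker\bfP$ to an element $V+r+\xi\in\Om^0(E)$ preserving the generalized metric, and there is no formal reason for it to vanish. Concretely: take a torsion-free solution ($H=0$, $\phi$ constant, $V=0$ forced when the holonomy is all of $G_2$) with a reducible instanton; any covariantly constant $r\in\Om^0(\ad P)$ lies in $\ker\bfP$ and contributes $-2[c(r,F_\theta)]$, which for an abelian factor is a multiple of a first Chern class, and with an indefinite pairing as in \eqref{eq:pairingc} one can arrange $c(F_\theta\wedge F_\theta)=0$ (so the Bianchi identity holds with $H=0$) while $[c(r,F_\theta)]\neq 0$. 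So the unconditional output of your long exact sequence is $0\to H^2(M)/\Im\delta_0 \to H^1(\widehat{KS}_7)\to\ker\delta\to 0$, and injectivity of $H^2(M)\to H^1(\widehat{KS}_7)$ needs either $\Im\delta_0=0$ as an explicit hypothesis (automatic when $\ker\bfP=0$) or an argument tailored to the solutions considered. Your second fallback---redefining $\ker\delta$ as a quotient by all of $\Lie\Aut(E)$ so that the sequence is "exact by construction"---does not prove the proposition as stated; it changes the statement. The proposal therefore stops exactly where the real content of the proposition begins, and you should isolate and address the condition $\Im\delta_0=0$ explicitly.
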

The space $H^1(\widehat{KS}_7)$ is closer to the physical moduli space of $3$-dimensional compactifications of the heterotic string (preserving $N=1/2$ supersymmetry), as its elements are compatible with the flux quantization principle.

\end{document}